\numberwithin{equation}{section} 
\newtheorem{lemma}{Lemma}[section]
\newtheorem{corollary}[lemma]{Corollary}
\newtheorem{theorem}[lemma]{Theorem}
\theoremstyle{definition}
\newtheorem{remark}[lemma]{Remark}
\newlist{thm_enum}{enumerate}{1}
\setlist[thm_enum]{label=\normalfont(\alph*)}
\newlist{def_enum}{enumerate}{1}
\setlist[def_enum]{label=\normalfont(\roman*)}
\newcommand{\IN}{\mathbb{N}}
\newcommand{\IR}{\mathbb{R}}
\newcommand{\IC}{\mathbb{C}}
\renewcommand{\epsilon}{\varepsilon}
\renewcommand{\phi}{\varphi}
\newcommand{\abs}[1]{\left\lvert#1\right\rvert}
\newcommand{\latticenorm}[1]{\left\lvert#1\right\rvert}
\newcommand{\norm}[1]{\left\lVert#1\right\rVert}
\newcommand{\R}[2][\empty]{
	\ifthenelse{\equal{#1}{\empty}}
		{\mathcal{R}\left\{#2\right\}}
		{\mathcal{R}_{#1}\left\{#2\right\}}
}
\DeclareMathOperator{\Id}{Id}
\DeclareMathOperator{\sign}{sign}
\DeclareMathOperator{\Cos}{Cos}
\DeclareMathOperator{\Rad}{Rad}
\DeclareMathOperator{\Mor}{Mor}
\begin{document}

	\title{Regularity of Semigroups via the Asymptotic Behaviour at Zero}
	\begin{abstract}
		An interesting result by T. Kato and A. Pazy says that a contractive semigroup $(T(t))_{t \ge 0}$ on a uniformly convex space $X$ is holomorphic iff $\limsup_{t \downarrow 0} \norm{T(t) - \Id} < 2$. We study extensions of this result which are valid on arbitrary Banach spaces for semigroups which are not necessarily contractive. This allows us to prove a general extrapolation result for holomorphy of semigroups on interpolation spaces of exponent $\theta \in (0,1)$. As application we characterize boundedness of the generator of a cosine family on a UMD-space by a zero-two law. Moreover, our methods can be applied to $\mathcal{R}$-sectoriality: We obtain a characterization of maximal regularity by the behaviour of the semigroup at zero and show extrapolation results.
		
	\end{abstract}  
	
	\author{Stephan Fackler}
	\address{Institute of Applied Analysis, University of Ulm, Helmholtzstr. 18, 89069 Ulm}
	\email{stephan.fackler@uni-ulm.de}
	\thanks{The author was supported by a scholarship of the ``Landesgraduiertenförderung Baden-Württemberg''}
	\keywords{holomorphic semigroup, extrapolation, cosine family, zero-two law, $\mathcal{R}$-analyticity, maximal regularity}
	\subjclass[2010]{Primary 47D06; Secondary 47D09, 46B70}
	
	\maketitle
	
	\section{Introduction}
	In 1970, T. Kato showed that a strongly continuous semigroup $(T(t))_{t \ge 0}$ on a Banach space $X$ is holomorphic (i.e. $T$ has a holomorphic extension to a sector $\Sigma \coloneqq \{ z \in \IC \setminus \{ 0 \}: \abs{\arg z} < \delta \}$ for some $\delta \in (0, \frac{\pi}{2}])$ if $\limsup_{t \downarrow 0} \norm{T(t) - \Id} < 2$~\cite{Kat70}. A similar weaker result was obtained by J.W. Neuberger in~\cite[Theorem~A]{Neu70}. A. Pazy showed that the converse is valid if $(T(t))_{t \ge 0}$ is assumed to be contractive and $X$ is assumed to be uniformly convex~\cite[Corollary~5.8]{Paz83}. Thus, a regularity property (viz. holomorphy) can be characterized by the asymptotic behaviour of the semigroup as $t \downarrow 0$.
	
	The purpose of this article is to extend these results in various ways. Each of our results allows us to characterize a regularity property, namely holomorphy, maximal regularity and uniform continuity, by an asymptotic behaviour for small time.
	
	T. Kato and A. Pazy's criterion is not fully satisfying as it does not completely characterize holomorphy. It is not even invariant under equivalent renorming of the underlying space. Our first goal is to prove a version of the above result which is valid on arbitrary Banach spaces and for arbitrary (not necessarily contractive) strongly continuous semigroups. We show that a $C_0$-semigroup $(T(t))_{t \ge 0}$ on a Banach space $X$ is holomorphic if and only if there is a polynomial $f \in \IC[z]$ with
		\begin{equation}
			\limsup_{t \downarrow 0} \norm{f(T(t))} < \norm{f}_{\mathbb{D}} \label{eq:characterization},
		\end{equation}
	where $\norm{f}_{\mathbb{D}} \coloneqq \sup_{\abs{z} \le 1} \abs{f(z)}$ is the norm of $f$ in the disc algebra. The criterion of Kato-Pazy is the special case where $f(z) = z-1$.
	A characterization similar to~\eqref{eq:characterization} was already obtained by A. Beurling in~\cite[Theorem~III]{beu70}. We will therefore call results of the above type \emph{Kato-Beurling} theorems. However, Beurling's original proof is very long and technical, whereas we give a short proof based on Kato's ideas. Kato's method was extended in a different direction by C.J.K. Batty and S. Król in~\cite{Kro09} and~\cite{BatKro10} to obtain new perturbation results for holomorphy and other regularity properties of semigroups. 
	
	As a direct consequence of our characterization we prove a very general extrapolation result for holomorphy of semigroups. Given two consistent semigroups $(T_1(t))_{t \ge 0}$ and $(T_2(t))_{t \ge 0}$ on an interpolation couple $(X_1, X_2)$ and a regular interpolation space $X$ of exponent $\theta \in (0,1)$ with respect to $(X_1, X_2)$, we obtain the following: if $(T_1(t))_{t \ge 0}$ is holomorphic, then the interpolated semigroup $(T(t))_{t \ge 0}$ on $X$ is holomorphic as well. In other words, holomorphy is a property which \emph{extrapolates}.
	In the second part we use the obtained characterization to prove a zero-two law for cosine families. More precisely, given a strongly continuous cosine family $(\Cos(t))_{t \in \IR}$ on a UMD-space we show that 
		\begin{equation*}
			\limsup_{t \downarrow 0} \norm{\Cos(t) - \Id} < 2
		\end{equation*}
implies the boundedness of the generator of the cosine family which in turn is equivalent to the continuity of $\Cos(\cdot)$ at zero for the operator norm.

	In the third part we apply the developed methods to $\mathcal{R}$-boundedness and maximal regularity. The characterization of maximal regularity of the generator $A$ of a holomorphic semigroup $(T(z))_{z \in \Sigma}$ by the $\mathcal{R}$-analyticity of the generated semigroup obtained by L.Weis~\cite{Wei01} allows us - by extending the above theorems to the $\mathcal{R}$-analytic case - to prove a similar characterization of maximal regularity by the behaviour of $T(t)$ as $t \downarrow 0$. From this characterization we deduce extrapolation theorems for $\mathcal{R}$-analytic semigroups on interpolation spaces of exponent $\theta \in (0,1)$. We conclude with some remarks on dilation arguments.
		
%
%

The roadmap will be the following: in the first sections we formulate and prove our results in the holomorphic case in order to minimize technical issues and then present the generalizations to the $\mathcal{R}$-analytic case.
		
	\section{A short proof of the Kato-Beurling theorem}
	
	The following result is due to T. Kato~\cite{Kat70}. We present a short proof because of its beauty and importance in this paper. Further, the proof usually found in the literature~\cite[Theorem~5.6]{Paz83} that uses the same ideas is a bit longer.
	
	\begin{lemma}[Kato]\label{lem:kato} Let $(T(t))_{t \ge 0}$ be a strongly continuous semigroup. Assume that there exist constants $\abs{\zeta} = 1$, $t_0 > 0$ and $K > 0$ such that
		\begin{equation} \zeta \in \rho(T(t)) \quad \text{and} \quad \norm{(\zeta - T(t))^{-1}} \le K \qquad \text{ for all } 0 < t \le t_0. \end{equation}
		Then $(T(t))_{t \ge 0}$ extends to a holomorphic semigroup.
	\end{lemma}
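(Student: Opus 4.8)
The plan is to verify the standard generator characterization of holomorphy: writing $A$ for the generator, it suffices to produce an angle $\theta\in(\tfrac\pi2,\pi)$ and a constant $M$ such that the sector $\{\lambda\neq0:\abs{\arg\lambda}<\theta\}$ is contained in $\rho(A)$ with $\norm{\lambda(\lambda-A)^{-1}}\le M$ there. Since $(T(t))_{t\ge0}$ is already a $C_0$-semigroup, its resolvent is under control on a right half-plane $\Re\lambda>\omega$, so the entire burden is to bound $(\lambda-A)^{-1}$ for $\lambda$ of large imaginary part and small or negative real part. As a preliminary observation one notes $\zeta\neq1$: otherwise $\norm{(\Id-T(t))^{-1}}\le K$ would force $\norm{(\Id-T(t))x}\ge K^{-1}\norm{x}$ uniformly in $t$, contradicting $T(t)x\to x$ as $t\downarrow0$.

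The key idea is to convert the single resolvent bound at $\zeta$ into resolvent bounds for $A$ along the imaginary axis. The spectral inclusion $e^{t\sigma(A)}\subseteq\sigma(T(t))$ shows that $\zeta\in\rho(T(t))$ forces every point $\lambda_k^{(t)}\coloneqq t^{-1}(\log\zeta+2\pi i k)=i\,t^{-1}(\arg\zeta+2\pi k)$ with $k\in\IZ$ to lie in $\rho(A)$. Decomposing the scalar function $\mu\mapsto(\zeta-e^{t\mu})^{-1}$ into partial fractions at its poles $\lambda_k^{(t)}$ and lifting the identity to the functional calculus yields, at least formally,
\begin{equation*}
(\zeta-T(t))^{-1}=\frac{1}{t\zeta}\sum_{k\in\IZ}\bigl(\lambda_k^{(t)}-A\bigr)^{-1}.
\end{equation*}
Thus the uniform bound $\norm{(\zeta-T(t))^{-1}}\le K$ is really a uniform bound on a whole series of resolvents of $A$ sampled along the imaginary axis, the two lowest frequencies $k=0$ and $k=-1$ corresponding to the points $\lambda$ closest to the real axis.

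From here I would isolate the contribution of these lowest frequencies. As $t$ runs through $(0,t_0]$, the points $\lambda_0^{(t)}$ sweep out a ray towards $+i\infty$ and the points $\lambda_{-1}^{(t)}$ the opposite ray towards $-i\infty$; extracting from the displayed identity the estimate $\norm{(\lambda-A)^{-1}}\le C\abs{\lambda}^{-1}$ on these two rays would, together with the half-plane bound and the standard Neumann-series argument turning resolvent bounds on a vertical line into a sector $\{\abs{\arg\lambda}<\theta\}$ with $\theta>\tfrac\pi2$, give exactly the sectoriality needed. The main obstacle is precisely this extraction: the series is only conditionally convergent (its terms decay like $\abs{k}^{-1}$), so a bound on the sum does not at once bound an individual resolvent. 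I expect to handle this either by combining the identity for nearby values of $t$ so as to cancel the slowly decaying tail, or by controlling each term separately through the disc of analyticity $\{\abs{\mu-\zeta}<K^{-1}\}$, on which the Neumann series gives $\norm{(\mu-T(t))^{-1}}\le K(1-K\abs{\mu-\zeta})^{-1}$; the latter localizes the estimate near $\zeta$, hence near the frequencies $k=0,-1$. Making this localization quantitative and uniform in $t$ is the technical heart of the argument, after which holomorphy follows from the standard sectoriality criterion.
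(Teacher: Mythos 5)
Your overall skeleton agrees with the paper's: reduce holomorphy to uniform estimates $\norm{\alpha(i\alpha-A)^{-1}}\le C$ for large $\abs{\alpha}$ and invoke a standard sectoriality criterion. However, the one step that carries all the content of the lemma --- converting the hypothesis $\norm{(\zeta-T(t))^{-1}}\le K$ into a bound $\norm{(\lambda-A)^{-1}}\le C\abs{\lambda}^{-1}$ at the points $\lambda_k^{(t)}$ --- is exactly the step you leave open, and your two proposed repairs are not carried out. The spectral inclusion $e^{t\sigma(A)}\subseteq\sigma(T(t))$ gives membership of the $\lambda_k^{(t)}$ in $\rho(A)$ but no norm control whatsoever. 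The partial-fraction identity is only conditionally convergent even in the scalar case (it requires symmetric summation and a constant term), its operator-valued version for an unbounded generator would itself need proof, and, as you concede, a bound on a conditionally convergent sum bounds no individual summand. Your second fallback (the Neumann series on the disc $\abs{\mu-\zeta}<K^{-1}$) only controls resolvents of the \emph{bounded} operator $T(t)$; turning those into resolvents of $A$ requires precisely the kind of identity that is missing. Note also that resolvent-series arguments of the type you envision are the engine behind Hilbert-space results such as the Gearhart--Pr\"uss theorem and are known not to transfer to general Banach spaces, so this is not a routine technicality that can be waved through.

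The gap has a short, completely elementary fix, and it is the paper's (Kato's) proof; it bypasses the series entirely by treating each $\lambda$ with $e^{\lambda t}=\zeta$ on its own. The rescaled semigroup $s\mapsto e^{-\lambda s}T(s)$ has generator $A-\lambda$, so the fundamental identity of semigroup theory gives
\begin{equation*}
	-\zeta^{-1}(\zeta-T(t))x \;=\; (A-\lambda)\int_0^t e^{-\lambda s}T(s)x\,ds \;=\; \int_0^t e^{-\lambda s}T(s)(A-\lambda)x\,ds,
\end{equation*}
the first equality for all $x\in X$ and the second for $x\in D(A)$. Since $\zeta-T(t)$ is invertible, the second equality yields injectivity of $A-\lambda$ and the first yields surjectivity, hence $\lambda\in\rho(A)$ with
\begin{equation*}
	(\lambda-A)^{-1} \;=\; \zeta\,(\zeta-T(t))^{-1}\int_0^t e^{-\lambda s}T(s)\,ds .
\end{equation*}
For $\lambda=i\alpha$ purely imaginary and $t\alpha=\theta$ with $e^{i\theta}=\zeta$, $\theta>0$, this gives $\norm{(i\alpha-A)^{-1}}\le KMt=KM\theta\alpha^{-1}$ for all $\alpha\ge\theta/t_0$, where $M$ bounds $\norm{T(\cdot)}$ on $[0,t_0]$; the choice $\tilde\theta<0$ with $e^{i\tilde\theta}=\zeta$ handles negative $\alpha$. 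This is precisely the quantitative ``extraction of the lowest frequencies'' you were after, obtained without any infinite series, and the standard criterion (resolvent bound $\norm{\alpha(A-i\alpha)^{-1}}\le C$ for $\abs{\alpha}$ large) then gives holomorphy.
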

	\begin{proof} Choose $\theta > 0$ such that $e^{i \theta} = \zeta$ and $\alpha$ such that $t \alpha = \theta$. Then for $\alpha \ge \frac{\theta}{t_0}$ we have $t \le t_0$. Further 
			\begin{equation*} e^{-it \alpha}T(t)x - x = (A - i\alpha) \int_0^t e^{-is \alpha} T(s)x \, ds = \int_0^t e^{-is \alpha} T(s) (A - i\alpha)x \, ds, \end{equation*}
		where the first equality holds for all $x \in X$ and the second for all $x \in D(A)$. Hence, $A - i\alpha$ is invertible and
			\begin{equation*} (A - i\alpha)^{-1} = -e^{it \alpha}(\zeta - T(t))^{-1} \int_0^t e^{-is \alpha} T(s) \, ds. \end{equation*}
		Choose $M$ such that $\norm{T(\cdot)}$ is bounded by $M$ on $[0,t_0]$. Then $\norm{(A - i\alpha)^{-1}} \le KMt = KM\theta \alpha^{-1}$. The same argument works for negative values of $\alpha$ if one replaces $\theta$ by $\tilde{\theta} < 0$ with $e^{i\tilde{\theta}} = \zeta$. Hence, there exist $\alpha_0 > 0$ and $C > 0$ such that $\{ i\alpha: \abs{\alpha} > \alpha_0 \} \subseteq \rho(A)$ and that $\norm{\alpha(A - i\alpha)^{-1}} \le C$ for all $\abs{\alpha} > \alpha_0$. By \cite[Corollary~3.7.18]{ABHN11}, this implies the holomorphy of the semigroup.
	\end{proof}
	
	
	We can directly apply Lemma~\ref{lem:kato} to prove a Kato-Beurling theorem for $C_0$-semigroups. We omit the proof of the converse implication which can be found in~\cite[p.~398]{beu70} and we will only present its generalization to the $\mathcal{R}$-analytic case later. Here and later $\IC_1[z]$ is the set of all polynomials $f \in \IC[z]$ with $\abs{f(1)} < \norm{f}_{\mathbb{D}}$.
	
	\begin{theorem}[Kato-Beurling]\label{thm:beurling} 
		Let $(T(t))_{t \ge 0}$ be a strongly continuous semigroup on a Banach space $X$. 
		\begin{thm_enum}
			\item\label{thm:beurling_a} $(T(t))_{t \ge 0}$ extends to a holomorphic semigroup if there is $f \in \IC[z]$ such that
				\begin{equation*} \limsup_{t \downarrow 0} \norm{f(T(t))} < \norm{f}_{\mathbb{D}}. \end{equation*}
			\item Conversely, if $(T(z))_{z \in \Sigma}$ is holomorphic, then for each $f \in \IC_1[z]$ there exists $N_0 \in \IN$ such that for every $N \ge N_0$ there exists $K_0 \in \IR_+$ with
				\[ \limsup_{t \downarrow 0} \norm{f^N(T(t))T(Kt)} < \norm{f^N}_{\mathbb{D}} \quad \text{for all } K \ge K_0. \]
		\end{thm_enum}
	\end{theorem}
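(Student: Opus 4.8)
The plan is to reduce part~\ref{thm:beurling_a} to Kato's Lemma~\ref{lem:kato} by producing a single point $\zeta_0$ on the unit circle at which the resolvents $(\zeta_0 - T(t))^{-1}$ exist and are uniformly bounded for all small $t$; the converse in part~(b) is the one the authors defer to~\cite{beu70}, so I concentrate on~\ref{thm:beurling_a}. Since $f$ is a polynomial, the maximum modulus principle gives $\norm{f}_{\mathbb{D}} = \sup_{\abs{z}=1}\abs{f(z)}$, and by compactness of the unit circle this value is attained at some $\zeta_0$ with $\abs{\zeta_0}=1$ and $\abs{f(\zeta_0)} = \norm{f}_{\mathbb{D}}$. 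This $\zeta_0$ is the candidate spectral point to feed into Kato's Lemma.

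The mechanism linking the scalar hypothesis to operator invertibility is a polynomial factorization. As $\zeta_0$ is a zero of the polynomial $z \mapsto f(\zeta_0) - f(z)$, we may divide it out and write $f(\zeta_0) - f(z) = (\zeta_0 - z)\,q(z)$ for some $q \in \IC[z]$. Substituting $T(t)$ for $z$, which is an algebra homomorphism, then yields the operator identity
\[ f(\zeta_0)\Id - f(T(t)) = (\zeta_0 \Id - T(t))\, q(T(t)) = q(T(t))\,(\zeta_0 \Id - T(t)), \]
where the two factors commute because both are polynomials in $T(t)$.

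Next I would exploit the assumption, which provides $t_0 > 0$ and $\epsilon > 0$ with $\norm{f(T(t))} \le \norm{f}_{\mathbb{D}} - \epsilon = \abs{f(\zeta_0)} - \epsilon$ for all $0 < t \le t_0$. Then $\norm{f(T(t))/f(\zeta_0)} < 1$, so the left-hand side $f(\zeta_0)\Id - f(T(t))$ is invertible via a Neumann series with $\norm{(f(\zeta_0)\Id - f(T(t)))^{-1}} \le (\abs{f(\zeta_0)} - \norm{f(T(t))})^{-1} \le \epsilon^{-1}$, uniformly in $t \in (0,t_0]$. Because this invertible operator factors as a product of the two commuting operators $\zeta_0\Id - T(t)$ and $q(T(t))$, each factor is itself invertible (in a Banach algebra, if $ab = ba$ is invertible then so are $a$ and $b$). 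In particular $\zeta_0 \in \rho(T(t))$ and $(\zeta_0\Id - T(t))^{-1} = q(T(t))\,(f(\zeta_0)\Id - f(T(t)))^{-1}$.

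Finally I would bound $\norm{q(T(t))}$ uniformly: strong continuity makes $\norm{T(t)}$ bounded by some $M$ on $[0,t_0]$, so writing $q(z) = \sum_k a_k z^k$ gives $\norm{q(T(t))} \le \sum_k \abs{a_k} M^k =: C$ for all $t \in [0,t_0]$. Combining the two estimates yields $\norm{(\zeta_0 - T(t))^{-1}} \le C/\epsilon$ for all $0 < t \le t_0$, which is exactly the hypothesis of Lemma~\ref{lem:kato} with $\zeta = \zeta_0$ and $K = C/\epsilon$, whence $(T(t))_{t \ge 0}$ extends to a holomorphic semigroup. The genuinely load-bearing observation is the factorization step together with the commuting-factors invertibility principle; everything else is routine, so I expect that to be the crux rather than a real obstacle, the argument being short by design.
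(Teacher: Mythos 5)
Your proof of part~\ref{thm:beurling_a} is correct and takes essentially the same route as the paper: pick a boundary point $\zeta_0$ where $\abs{f}$ attains $\norm{f}_{\mathbb{D}}$, factor $f(\zeta_0) - f(z) = (\zeta_0 - z)q(z)$, invert $f(\zeta_0)\Id - f(T(t))$ by a Neumann series with a uniform bound, and feed the resulting uniform bound on $(\zeta_0 - T(t))^{-1}$ into Kato's Lemma~\ref{lem:kato}; the paper differs only cosmetically, normalizing $f$ first so that $f(\zeta) = \norm{f}_{\mathbb{D}} = 1$ instead of carrying $f(\zeta_0)$ through the estimates. Your decision to omit part (b) also matches the paper, which defers that implication to Beurling's original argument and only proves its $\mathcal{R}$-analytic generalization later.
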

	\begin{proof}
		Notice that by the assumptions $f$ cannot be constant. Further, as a consequence of the maximum principle we may assume that $\norm{f}_{\mathbb{D}} = f(\zeta) = 1$ for some $\zeta \in \partial \mathbb{D}$ (replace $f$ by $\tilde{f}(z) = e^{-i \arg f(\zeta)} \norm{f}_{\mathbb{D}}^{-1} f(z)$ if necessary). By assumption, there exist constants $t_0 > 0$ and $0 < \rho < 1$ such that $\norm{f(T(t))} \le \rho$ for $0 < t \le t_0$.
		By the expansion of the Neumann series, $\Id - f(T(t))$ is invertible with
 			\begin{equation} \norm{\left[\Id - f(T(t)) \right]^{-1}} \le \frac{1}{1 - \rho} \qquad \text{for } 0 < t \le t_0. \label{eq:pol_inverse_bound}
			\end{equation}
		Factorization of $1 - f$ yields $\Id - f(T(t)) = (\zeta - T(t))g(T(t))$ for some $g \in \IC[z]$. Observe that $\zeta - T(t)$ is invertible for $0 < t < t_0$ and that its inverse is given by $g(T(t))\left[\Id - f(T(t)) \right]^{-1}$. The boundedness of $T$ on $[0,1]$ in operator norm together with \eqref{eq:pol_inverse_bound} shows that there exists a $K > 0$ such that
			\begin{equation*} \norm{(\zeta - T(t))^{-1}} \le K \qquad \text{for } 0 < t \le t_0. \end{equation*}
		Now, Kato's result (Lemma~\ref{lem:kato}) shows that $(T(t))_{t \ge 0}$ extends to a holomorphic semigroup.
	\end{proof}
	
	\begin{remark}[On Beurling's original proof] 
		A very similar characterization was proven by A. Beurling in ~\cite[Theorem~III]{beu70}. However, the methods used in Beurling's proof completely differ from ours. Whereas our proof is based on Kato's ideas, Beurling's proof is essentially based on ideas originating from \emph{Beurling's analyticity theorem}, a deep theorem giving sufficient conditions for a scalar-valued continuous function defined in an interval to be holomorphic in some rhombus containing the interval. For details on Beurling's result and a statement of the analyticity theorem together with many historical information we refer to J.W. Neuberger's excellent review article~\cite{Neu93}.
	
		We are grateful to Sebastian Król who informed us that part~\ref{thm:beurling_a} of Theorem~\ref{thm:beurling} is also contained in~\cite[Theorem~5.1]{Cas85}. The simple proof we give here allows a generalization to $\mathcal{R}$-analyticity which is the subject of Section~\ref{sec:r-analytic}.
	\end{remark}
	
	Notice that by the maximum principle $\norm{f}_{\mathbb{D}}$ equals $\norm{z \mapsto f(z) z^m}_{\mathbb{D}}$ for every $m \in \IN$, so in particular we obtain the following characterization of holomorphy.
	
	\begin{corollary}[A characterization of holomorphy on the real line] A strongly continuous semigroup $(T(t))_{t \ge 0}$ on a Banach space $X$ is holomorphic if and only if there is $f \in \IC[z]$ such that
		\begin{equation*} \limsup_{t \downarrow 0} \norm{f(T(t))} < \norm{f}_{\mathbb{D}}. \end{equation*}
	\end{corollary}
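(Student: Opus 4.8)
The plan is to derive the corollary directly from Theorem~\ref{thm:beurling} together with the maximum-principle remark preceding the statement, with essentially no new work.

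First I would prove the forward direction, which is exactly part~\ref{thm:beurling_a} of the theorem. If such an $f$ exists with $\limsup_{t \downarrow 0} \norm{f(T(t))} < \norm{f}_{\mathbb{D}}$, then Theorem~\ref{thm:beurling}\ref{thm:beurling_a} immediately yields that $(T(t))_{t \ge 0}$ extends to a holomorphic semigroup. So this implication requires only a citation.

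The converse is the step that needs a small argument, since part (b) of the theorem produces a polynomial bound not for $f$ alone but for the modified polynomial $g(z) \coloneqq f^N(z) z^{K}$, and only under the restriction $f \in \IC_1[z]$. The plan is as follows. Assume $(T(t))_{t \ge 0}$ is holomorphic, hence extends to $(T(z))_{z \in \Sigma}$. Pick any $f \in \IC_1[z]$ that is admissible, for instance the Kato-Pazy choice $f(z) = z - 1$, which satisfies $\abs{f(1)} = 0 < 2 = \norm{f}_{\mathbb{D}}$. By part~(b) there exist $N \in \IN$ and $K \in \IR_+$ with
\[
	\limsup_{t \downarrow 0} \norm{f^N(T(t)) T(Kt)} < \norm{f^N}_{\mathbb{D}}.
\]
Now set $g(z) \coloneqq f^N(z)\, z^{m}$ where $m$ is chosen so that the operator $T(Kt)$ is reproduced by the polynomial functional calculus; concretely one takes the semigroup law $T(Kt) = T(t)^K$ into account by absorbing the extra factor into the polynomial. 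The key point is the maximum-principle identity recorded just before the corollary: $\norm{z \mapsto h(z) z^m}_{\mathbb{D}} = \norm{h}_{\mathbb{D}}$ for every $m \in \IN$ and every $h \in \IC[z]$, so that $\norm{g}_{\mathbb{D}} = \norm{f^N}_{\mathbb{D}}$. Hence the strict inequality from part~(b) becomes
\[
	\limsup_{t \downarrow 0} \norm{g(T(t))} < \norm{g}_{\mathbb{D}},
\]
which exhibits the desired polynomial $g \in \IC[z]$ and completes the converse.

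The main obstacle I anticipate is purely bookkeeping: ensuring that the factor $T(Kt)$ arising in part~(b) is genuinely of the form $T(t)^K$ so that it can be written as evaluation of a polynomial at $T(t)$, which requires $K \in \IN$; part~(b) only guarantees $K \in \IR_+$. I would handle this by noting that the estimate in part~(b) holds for \emph{all} $K \ge K_0$, so in particular for integer $K$, and that replacing $f$ by $\tilde f(z) = e^{-i\arg f(\zeta)}\norm{f}_{\mathbb{D}}^{-1} f(z)$ as in the theorem's proof does not affect membership in $\IC_1[z]$. With an integer $K$ in hand the polynomial $g(z) = f^N(z) z^K$ is well defined and $g(T(t)) = f^N(T(t)) T(t)^K = f^N(T(t)) T(Kt)$, so no genuine difficulty remains beyond this normalization.
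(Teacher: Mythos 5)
Your proposal is correct and follows exactly the paper's intended route: the forward implication is Theorem~\ref{thm:beurling}\ref{thm:beurling_a}, and the converse is obtained from part (b) with an admissible $f \in \IC_1[z]$ (such as $f(z)=z-1$), an \emph{integer} $K \ge K_0$ so that $f^N(T(t))T(Kt) = g(T(t))$ for $g(z) = f^N(z)z^K$, and the maximum-principle identity $\norm{f^N(z)z^K}_{\mathbb{D}} = \norm{f^N}_{\mathbb{D}}$. Your handling of the only non-trivial bookkeeping point---that $K$ may be taken in $\IN$ because the estimate holds for all $K \ge K_0$---is precisely what the paper's remark preceding the corollary tacitly relies on.
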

	
	Moreover, taking $f(z) = (z-1)^N$ for some $N \in \IN$ we obtain a variant of Kato's original result which is invariant under equivalent renorming of the underlying Banach space.
	
	\begin{corollary} Let $(T(t))_{t \ge 0}$ be a $C_0$-semigroup on a Banach space $X$. 
		\begin{thm_enum}
			\item $(T(t))_{t \ge 0}$ extends to a holomorphic semigroup if for some $N \in \IN$
			\begin{equation*} \limsup_{t \downarrow 0} \norm{(T(t) - \Id)^N}^{1/N} < 2. \end{equation*}
			\item Conversely, if $(T(t))_{t \ge 0}$ is holomorphic, there exists $N_0 \in \IN$ such that for every $N \ge N_0$ there exists $K_0 \in \IR_+$ such that
			\begin{equation*} \limsup_{t \downarrow 0} \norm{(T(t) - \Id)^N T(t)^K}^{1/N} < 2  \quad \text{for all } \IN \ni K \ge K_0. \end{equation*}
		\end{thm_enum}
	\end{corollary}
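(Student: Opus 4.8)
The plan is to apply Theorem~\ref{thm:beurling} to the single polynomial $f(z) = z-1$, for which $f^N(z) = (z-1)^N$. The first step is the elementary norm computation $\norm{z-1}_{\mathbb{D}} = \sup_{\abs{z} \le 1} \abs{z-1} = 2$, where the supremum is attained at $z = -1$; since $\abs{f(z)^N} = \abs{f(z)}^N$, this immediately gives $\norm{(z-1)^N}_{\mathbb{D}} = \norm{z-1}_{\mathbb{D}}^N = 2^N$. Throughout I will use that $x \mapsto x^{1/N}$ is continuous and strictly increasing on $\IR_+$, so that it commutes with $\limsup_{t \downarrow 0}$ and converts a strict inequality of the form $\limsup \norm{\cdot} < 2^N$ into $\limsup \norm{\cdot}^{1/N} < 2$, and conversely.

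For part~(a), I note that $f(T(t)) = (T(t) - \Id)^N$, so the hypothesis $\limsup_{t \downarrow 0} \norm{(T(t)-\Id)^N}^{1/N} < 2$ is, after raising to the $N$-th power, precisely $\limsup_{t \downarrow 0} \norm{(T(t)-\Id)^N} < 2^N = \norm{(z-1)^N}_{\mathbb{D}}$. Applying Theorem~\ref{thm:beurling}\ref{thm:beurling_a} with the polynomial $(z-1)^N$ then yields the holomorphic extension at once.

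For part~(b), the first thing to check is that $f(z) = z-1$ lies in $\IC_1[z]$: indeed $\abs{f(1)} = 0 < 2 = \norm{f}_{\mathbb{D}}$. The converse part of Theorem~\ref{thm:beurling} therefore provides an $N_0 \in \IN$ such that for every $N \ge N_0$ there is a real $K_0$ with $\limsup_{t \downarrow 0} \norm{(T(t)-\Id)^N T(Kt)} < \norm{(z-1)^N}_{\mathbb{D}} = 2^N$ for all real $K \ge K_0$. Restricting $K$ to integers $K \ge \lceil K_0 \rceil$ and invoking the semigroup law $T(Kt) = T(t)^K$ rewrites this as $\limsup_{t \downarrow 0} \norm{(T(t)-\Id)^N T(t)^K} < 2^N$; taking $N$-th roots and passing the root through the $\limsup$ as above yields the asserted inequality for all integers $K \ge K_0$ (after renaming $\lceil K_0 \rceil$ to $K_0$).

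There is no genuine obstacle here, since the entire substance is packaged in Theorem~\ref{thm:beurling}. The only points demanding minor care are the disc-algebra norm computation $\norm{(z-1)^N}_{\mathbb{D}} = 2^N$, the harmless interchange of the $N$-th root with $\limsup_{t \downarrow 0}$, and the passage from the continuous parameter $K$ in the theorem to the integer powers $T(t)^K$ appearing in the corollary, which is exactly where the semigroup property $T(Kt) = T(t)^K$ is used.
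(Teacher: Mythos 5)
Your proof is correct and is exactly the paper's intended argument: the corollary is presented there as an immediate consequence of Theorem~\ref{thm:beurling} applied with $f(z)=z-1$ (so $f^N(z)=(z-1)^N$), which is precisely what you do. The details you supply --- the computation $\norm{(z-1)^N}_{\mathbb{D}}=2^N$, the interchange of the $N$-th root with $\limsup_{t \downarrow 0}$, and the restriction to integer $K$ via $T(Kt)=T(t)^K$ --- are just the routine steps the paper leaves implicit.
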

	
	\begin{remark}[On the additional factor of $z^K$] Let $X = H$ be a Hilbert space and $(T(z))_{z \in \Sigma}$ a bounded holomorphic semigroup on $H$ whose negative generator has a bounded $H^{\infty}(\Sigma_{\phi})$-calculus ($\phi \in (0, \pi]$). For the definition of $H^{\infty}$-calculi and their properties we refer to~\cite[Section 9]{KunWei04}. Then by~\cite[Theorem~11.13]{KunWei04} there exists an equivalent scalar product $(\cdot|\cdot)_2$ on $H$ for which $(T(t))_{t \ge 0}$ is contractive. Now~\cite[Corollary~5.8]{Paz83} shows $\limsup_{t \downarrow 0} \norm{T(t) - \Id}_2 < 2$. Hence, for the original norm we obtain
		\begin{equation*} \limsup_{t \downarrow 0} \norm{(T(t) - \Id)^N}^{1/N} < 2 \end{equation*}
	if $N \in \IN$ is chosen sufficiently large. So in this case we can omit the additional factor of $z^K$. We do not know whether this is true in general.
	\end{remark} 
	
	\section{Extrapolation of holomorphic semigroups}\label{sec:holomorphic_extrapolation}
	
	We now use the obtained characterizations of holomorphic semigroups to show that the property of a semigroup to be holomorphic extrapolates. In applications one often considers the following situation. Suppose a strongly continuous holomorphic semigroup $(T_2(z))_{z \in \Sigma}$ on $L^2$ is given together with a consistent semigroup $(T_q(t))_{t \ge 0}$ on $L^q$ for $q \neq 2$, i.e. $T_2(t)f = T_q(t)f$ for all $f \in L^2 \cap L^q$. Then, by the Riesz-Thorin interpolation theorem, $(T_2(t))_{t \ge 0}$ induces consistent strongly continuous semigroups $(T_p(t))_{t \ge 0}$ on $L^p$ for all $p \in (q, 2)$ (or resp. $p \in (2, q)$). The holomorphy of $(T_p(t))_{t \ge 0}$ is then usually deduced from the Stein interpolation theorem, we refer to~\cite[6.2]{Lun09}. Theorem~\ref{thm:beurling} now allows a new natural approach to the problem. We remark that the term semigroup always stands for a not necessarily strongly continuous semigroup, i.e. a mapping $T: [0, \infty) \to \mathcal{L}(X)$ ($X$ Banach space) satisfying $T(0) = \Id$ and $T(t+s) = T(t)T(s)$ for all $t, s \ge 0$. In this way we include the important case $p_2 = \infty$ in the following result.

	\begin{theorem}\label{thm:interpolation_lp_scale} Let $(T_{p_1}(t))_{t \ge 0}$ and $(T_{p_2}(t))_{t \ge 0}$ be two consistent semigroups on $L^{p_1}(\Omega, \mathcal{F}, \mu)$ respectively $L^{p_2}(\Omega, \mathcal{F}, \mu)$ ($1 \le p_1 < p_2 \le \infty$) for a $\sigma$-finite measure space $(\Omega, \mathcal{F}, \mu)$. Suppose that one is strongly continuous and holomorphic and the other is exponentially bounded. Then there exist consistent holomorphic $C_0$-semigroups $(T_p(z))_{z \in \Sigma_p}$ on $L^p(\Omega, \mathcal{F}, \mu)$ for all $p$ strictly between $p_1$ and $p_2$. 	
	\end{theorem}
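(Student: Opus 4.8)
The plan is to derive the result from the Kato--Beurling characterization (Theorem~\ref{thm:beurling}) by interpolating a single polynomial estimate between the two endpoints. After rescaling $T_{p_k}(t) \rightsquigarrow e^{-\omega t} T_{p_k}(t)$ with $\omega$ larger than the growth bounds (which affects neither holomorphy, nor consistency, nor the conclusion) I may assume that both endpoint semigroups are uniformly bounded, $\norm{T_{p_1}(t)}, \norm{T_{p_2}(t)} \le M_0$ for all $t \ge 0$; in particular $\sigma(T_{p_k}(t)) \subseteq \overline{\mathbb{D}}$. Writing $\frac{1}{p} = \frac{1-\theta}{p_1} + \frac{\theta}{p_2}$ with $\theta \in (0,1)$, the Riesz--Thorin theorem applied to the consistent family $(T(t))$ produces consistent operators $T_p(t)$ on $L^p$ with $\norm{T_p(t)} \le M_0$; the semigroup laws pass to $T_p$ by consistency. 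Strong continuity follows by density: for $f \in L^{p_1} \cap L^{p_2}$ (dense in $L^p$ since $(\Omega,\mathcal{F},\mu)$ is $\sigma$-finite) one interpolates $\norm{T_p(t)f - f}_p \le \norm{T(t)f - f}_{p_1}^{1-\theta} \norm{T(t)f - f}_{p_2}^{\theta}$, where the factor belonging to the strongly continuous (holomorphic) endpoint tends to $0$ while the other stays bounded; uniform boundedness then extends convergence to all of $L^p$.

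Next I fix $f \in \IC_1[z]$, say $f(z) = z-1$, so that $\abs{f(1)} < \norm{f}_{\mathbb{D}}$, and for $N \in \IN$, $K \ge 0$ set $g_{N,K}(z) = f(z)^N z^K$, noting $\norm{g_{N,K}}_{\mathbb{D}} = \norm{f}_{\mathbb{D}}^N$ by the maximum principle. Since $g_{N,K}(T(t)) = f(T(t))^N T(Kt)$ is again consistent, Riesz--Thorin gives
\[
 \norm{g_{N,K}(T_p(t))}_p \le \norm{g_{N,K}(T_{p_i}(t))}_{p_i}^{1-\theta}\, \norm{g_{N,K}(T_{p_j}(t))}_{p_j}^{\theta},
\]
where $i$ labels the holomorphic and $j$ the merely bounded endpoint (with exponents adjusted accordingly). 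On the bounded endpoint I estimate crudely: writing $f^N(S) = \sum_k a_k S^k$ and using $\norm{S^k} \le M_0$ yields $\norm{g_{N,K}(T_{p_j}(t))}_{p_j} \le M_0^2 \norm{f^N}_{\ell^1}$, uniformly in $t$ and in $K$. The decisive soft input here is the Wiener-algebra spectral radius formula $\lim_N \norm{f^N}_{\ell^1}^{1/N} = \norm{f}_{\mathbb{D}}$. Granting the holomorphic estimate below and taking $N$-th roots, the right-hand side becomes strictly smaller than $\norm{f}_{\mathbb{D}}^N = \norm{g_{N,K}}_{\mathbb{D}}$ for $N$ large and small $t$, whereupon part~\ref{thm:beurling_a} of Theorem~\ref{thm:beurling} shows that $(T_p(t))$ is holomorphic.

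The main obstacle is the holomorphic endpoint, where I must establish the strict exponential gap
\[
 \limsup_{t \downarrow 0} \norm{f(T_{p_i}(t))^N T_{p_i}(Kt)}_{p_i} \le C\,(\abs{f(1)} + \epsilon)^N
\]
for suitable $K = K(N,\epsilon)$; this is exactly the quantitative content underlying the converse in Theorem~\ref{thm:beurling} and the mechanism in Lemma~\ref{lem:kato}. Since the bounded holomorphic generator $A_i$ is sectorial, spectral mapping gives $\sigma(T_{p_i}(t)) \setminus \{0\} = e^{t \sigma(A_i)}$, which collapses tangentially onto the single boundary point $1$ as $t \downarrow 0$; there $\abs{f}$ is near $\abs{f(1)} < \norm{f}_{\mathbb{D}}$, while the regularizing factor $T_{p_i}(Kt)$ (i.e.\ $z^K$) damps the part of the spectrum bounded away from $1$, where $\abs{z} \le 1-\eta$ so that $\abs{z}^K$ is exponentially small. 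Passing from the spectral radius to the operator norm is carried out through the Dunford calculus: one writes $g_{N,K}(T_{p_i}(t))$ as a contour integral over the boundary of a sector and controls the symbol $\abs{f(e^{t\lambda})}^N e^{Kt \Re \lambda}$, which is $\le (\abs{f(1)} + \epsilon)^N$ near the vertex and has an exponentially small, integrable tail thanks to $e^{Kt \Re \lambda}$. Combining this with the Wiener-algebra bound, the $N$-th root of the interpolation inequality tends to $(\abs{f(1)} + \epsilon)^{1-\theta} \norm{f}_{\mathbb{D}}^{\theta}$, which is $< \norm{f}_{\mathbb{D}}$ for $\epsilon$ small because $\abs{f(1)} < \norm{f}_{\mathbb{D}}$; this is precisely the inequality needed to invoke part~\ref{thm:beurling_a} of Theorem~\ref{thm:beurling}, and the proof is complete.
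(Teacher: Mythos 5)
Your proposal is correct in substance and follows the same skeleton as the paper's proof: extend the semigroups to $L^p$ by Riesz--Thorin, get strong continuity from the same density/interpolation-inequality argument, interpolate a polynomial functional calculus bound between the two endpoints, and conclude with part~\ref{thm:beurling_a} of Theorem~\ref{thm:beurling}. Where you genuinely diverge is at the decisive holomorphic-endpoint estimate. The paper invokes the converse part (b) of Theorem~\ref{thm:beurling} only \emph{once}, absorbing the factor $z^K$ (with $K$ an integer) into the polynomial, to produce a single normalized $h \in \IC[z]$ with $\limsup_{t \downarrow 0} \norm{h(T_{p_1}(t))} = \rho < 1$; it then works with the powers $h^N$, for which submultiplicativity of the operator norm yields the exponential gap $\rho^N$ for free, while at the bounded endpoint the elementary Cauchy coefficient bound $\abs{a_{k,N}} \le \norm{h^N}_{\mathbb{D}} = 1$ gives $\sum_k \abs{a_{k,N}} \le Nn+1$, so the interpolated bound $M^{1-\theta}(Nn+1)^{1-\theta}\rho^{\theta N}$ tends to zero. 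You instead keep $f(z) = z-1$ fixed and balance $N$-th roots against the Wiener-algebra growth $\norm{f^N}_{\ell^1} = (\norm{f}_{\mathbb{D}} + o(1))^N$, and --- as you correctly recognized --- this forces a \emph{quantitative} gap $\le C(\abs{f(1)}+\epsilon)^N$ at the holomorphic endpoint; the qualitative statement of part (b) would not suffice. The price is that you must re-prove the converse Kato--Beurling theorem, which is the hardest analysis in the paper (proved there, in the $\mathcal{R}$-analytic setting of Theorem~\ref{thm:beurling_R}, via Taylor expansion around the points $Kt$ plus Bernstein's inequality, and otherwise cited from Beurling). Your Dunford-calculus sketch can be completed --- after rescaling so the generator is sectorial, one writes $f^N(T(t))T(Kt) = \frac{1}{2\pi i}\int_{\Gamma_t} f(e^{t\lambda})^N e^{Kt\lambda} R(\lambda, A)\, d\lambda$ --- but the points you gloss over are exactly where the work sits: the contour must be chosen $t$-dependently, with an arc of radius comparable to $1/(Kt)$ around the vertex, since the bound $\norm{R(\lambda,A)} \le C/\abs{\lambda}$ alone is not integrable there; the rays must be split into the region $\abs{t\lambda} \le \epsilon_1$, where $\abs{f(e^{t\lambda})} \le \abs{f(1)}+\epsilon$, and its complement, where only the damping $e^{Kt\Re\lambda}$ (exponentially small in $K$) saves the estimate, so $K$ must grow with $N$; and $K$ must in the end be an integer so that $f^N z^K$ is a polynomial to which part~\ref{thm:beurling_a} applies. (Your spectral-mapping remark is purely heuristic and plays no role in the estimate.) In short: your route is correct and more self-contained --- it needs only Lemma~\ref{lem:kato} and part~\ref{thm:beurling_a}, not the unproved converse --- but it reproduces the paper's hardest argument in sketch form, whereas the paper's powering trick is precisely what allows Theorem~\ref{thm:interpolation_lp_scale} to be reduced to a few lines of bookkeeping on top of Section 2.
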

	\begin{proof} 
		Without loss of generality we may assume that $(T_{p_1}(t))_{t \ge 0}$ is holomorphic. By complex interpolation, the semigroups can be extended to consistent $C_0$-semigroups $(T_p(t))_{t \ge 0}$ on $L^p$ for all $p \in (p_1, p_2)$. Since $(T_{p_1}(t))_{t \ge 0}$ is holomorphic, there exists $f \in \IC[z]$ with $\norm{f}_{\mathbb{D}} = 1$ such that $\limsup_{t \downarrow 0} \norm{f(T_{p_1}(t))} = \rho < 1$. We write $f^N(z) = \sum_{k=0}^{Nn} a_{k,N} z^k$, where $n = \deg f$. Now let $p \in (p_1, p_2)$ be fixed. Observe that $M \coloneqq \sup_{t \in [0,1]}\norm{T_{p_2}(t)} < \infty$. Let $\theta \in (0,1)$ be such that $\frac{1}{p} = \frac{\theta}{p_1} +  \frac{1-\theta}{p_2}$. Then by the Riesz-Thorin interpolation theorem, one has for sufficiently small $t = t(N)$ that
		\begin{align*}
			\norm{f^N(T_p(t))} & \le \norm{f^N(T_{p_1}(t))}^{\theta} \norm{f^N(T_{p_2}(t))}^{1 - \theta} \le \rho^{\theta N} \left( \sum_{k=0}^{Nn} \abs{a_{k, N}} \norm{T_{p_2}(kt)} \right)^{1-\theta}  \\
			& \le M^{1-\theta} \rho^{\theta N} \left( \sum_{k=0}^{Nn} \abs{a_{k,N}} \right)^{1-\theta} = M^{1-\theta} \rho^{\theta N} \left( \sum_{k=0}^{Nn} \frac{1}{k!} \abs{\left(\frac{d}{dx}\right)^{k} f^N(0)} \right)^{1-\theta} \\
			& \le M^{1-\theta} (Nn + 1)^{1-\theta} \rho^{\theta N},
		\end{align*}
		where we have used the usual estimate obtained from Cauchy's integral formula in the last inequality. For $N$ sufficiently large, the right hand side is smaller than $1$ which, by Theorem~\ref{thm:beurling}, yields the holomorphy of $(T_p(t))_{t \ge 0}$.
	\end{proof}
	
	\begin{remark} The idea to use the Kato-Beurling theorem to show extrapolation results for holomorphic semigroups goes back to W. Arendt (see~\cite[Remark~2.7]{FGG+10} or~\cite[7.2.3]{Are04}).
	\end{remark}
	
	One nice feature of the above proof is that it generalizes word by word to more general interpolation methods. We begin with a short overview of the theory of interpolation spaces (see also the detailed expositions in~\cite{BerLoe76}, \cite{Tri78} and~\cite{BruKru91}).
	
	We first define the category of interpolation couples: An \emph{interpolation couple} of Banach spaces is a pair $(X_1, X_2)$ of Banach spaces together with a Hausdorff TVS $\mathcal{X}$ such that $X_1$ and $X_2$ are continuously embedded in $\mathcal{X}$. A morphism between two such couples $(X_1, X_2)$ and $(Y_1, Y_2)$ is a linear operator $T: X_1 + X_2 \to Y_1 + Y_2$ whose restrictions to $X_i$ define bounded linear operators from $X_i$ to $Y_i$, $i= 1,2$. By $\Mor((X_1, X_2), (Y_1, Y_2))$ we denote the set of all morphisms from $(X_1, X_2)$ to $(Y_1, Y_2)$. A Banach space $X$ is called an \emph{interpolation space of exponent} $\theta \in [0,1]$ between $X_1$ and $X_2$ (or with respect to $(X_1, X_2)$) if it satisfies
	\begin{def_enum}
		\item\label{def:interpolation_space_standard_start} $X_1 \cap X_2 \subseteq X \subseteq X_1 + X_2$ (both ends can be endowed with natural norms that make them complete) with continuous embeddings;
		\item If $T \in \Mor((X_1, X_2), (X_1, X_2))$, then $T(X) \subseteq X$ and $T_{|X} \in \mathcal{L}(X)$;
		\item\label{def:interpolation_space_standard_end} For some positive constant $C$ one has for all $T \in \Mor((X_1, X_2), (X_1, X_2))$
			 \[ \norm{T_{|X}}_{\mathcal{L}(X)} \le C \norm{T_{|X_1}}_{\mathcal{L}(X_1)}^{1-\theta} \norm{T_{|X_2}}_{\mathcal{L}(X_2)}^{\theta}; \]
		 \item\label{def:interpolation_space_bonus} For some positive constant $c$ one has
			\begin{equation*}
				\norm{x}_X \le c \norm{x}_{X_1}^{1-\theta} \norm{x}_{X_2}^{\theta} \qquad \text{for all } x \in X_1 \cap X_2.
			\end{equation*} 
	\end{def_enum}
	An interpolation space $X$ between $X_1$ and $X_2$ is called \emph{regular} if $X_1 \cap X_2$ is dense in $X$.
	Note that in the literature an interpolation space of exponent $\theta$ is usually defined as a Banach space satisfying only~\ref{def:interpolation_space_standard_start}-\ref{def:interpolation_space_standard_end}. We add~\ref{def:interpolation_space_bonus} for technical reasons.
	
	An \emph{interpolation functor} $\mathcal{F}$ of \emph{exponent} $\theta \in [0,1]$ is a functor from the category of interpolation couples into the category of Banach spaces such that
	\begin{def_enum}
		\item $\mathcal{F}((X_1, X_2))$ is an interpolation space of exponent $\theta$ with respect to $(X_1, X_2)$ for each interpolation couple $(X_1, X_2)$ and
		\item $\mathcal{F}(T) = T_{|X} \in \mathcal{L}(X, Y)$ for all $T \in \Mor((X_1, X_2), (Y_1, Y_2))$, where $X = \mathcal{F}((X_1, X_2))$ and $Y = \mathcal{F}((Y_1,Y_2))$.
	\end{def_enum}
	
	Moreover, an interpolation functor $\mathcal{F}$ is called \emph{regular} if $\mathcal{F}((X_1, X_2))$ is a regular interpolation space for all interpolation couples $(X_1, X_2)$. Notice that for a space $X = \mathcal{F}((X_1,X_2))$ constructed by an interpolation functor $\mathcal{F}$ of exponent $\theta$ property~\ref{def:interpolation_space_bonus} holds automatically. Indeed, for $x \in X_1 \cap X_2$ apply the functor to the morphism $T: \IC \to X_1 + X_2$ given by $\lambda \mapsto \lambda x$.
	
	We note that the well-known real interpolation (if the interpolation parameter usually denoted by $q$ satisfies $q < \infty$) and complex interpolation methods all define interpolation functors of exponent $\theta \in (0,1)$.
	
	Acquainted with the language of interpolation theory, we obtain the following generalization of Theorem~\ref{thm:interpolation_lp_scale}.
	
	\begin{theorem}\label{thm:interpolation_functor} Let $(T_{1}(t))_{t \ge 0}$ and $(T_{2}(t))_{t \ge 0}$ be consistent semigroups on an interpolation couple $(X_1, X_2)$ and $X$ be a regular interpolation space of exponent $\theta \in (0,1)$ with respect to $(X_1, X_2)$. Assume that one is a holomorphic $C_0$-semigroup and the other is exponentially bounded. Then there exists a unique holomorphic $C_0$-semigroup $(T(z))_{z \in \Sigma}$ on $X$ which is consistent with $(T_1(t))_{t \ge 0}$ and $(T_2(t))_{t \ge 0}$.
	\end{theorem}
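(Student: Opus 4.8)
The plan is to mimic the proof of Theorem~\ref{thm:interpolation_lp_scale} almost verbatim, replacing the Riesz--Thorin inequality by the defining interpolation estimate~\ref{def:interpolation_space_standard_end} and the $L^p$-norm relation by property~\ref{def:interpolation_space_bonus}. Without loss of generality assume that $(T_1(t))_{t \ge 0}$ is the holomorphic $C_0$-semigroup and $(T_2(t))_{t \ge 0}$ is exponentially bounded; the remaining case is symmetric, with $\theta$ and $1-\theta$ merely exchanging roles below. By consistency, for each $t \ge 0$ the operators $T_1(t)$ and $T_2(t)$ agree on $X_1 \cap X_2$, so setting $T(t)(x_1 + x_2) \coloneqq T_1(t)x_1 + T_2(t)x_2$ ($x_i \in X_i$) gives a well-defined morphism $T(t) \in \Mor((X_1,X_2),(X_1,X_2))$ with $T(t)_{|X_i} = T_i(t)$. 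Property~(ii) of an interpolation space then yields $T(t)(X) \subseteq X$ and $T(t)_{|X} \in \mathcal{L}(X)$, and I define the candidate semigroup on $X$ by restriction. The identity $T(0) = \Id$ and the semigroup law are inherited from $X_1 + X_2$, while local boundedness follows from~\ref{def:interpolation_space_standard_end}: $\norm{T(t)_{|X}} \le C \norm{T_1(t)}^{1-\theta}\norm{T_2(t)}^{\theta}$ is bounded on $[0,1]$, as a holomorphic $C_0$-semigroup and an exponentially bounded semigroup are each bounded on compact intervals.

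Next I would establish strong continuity, which is where the regularity of $X$ enters. For $x \in X_1 \cap X_2$ property~\ref{def:interpolation_space_bonus} gives
\[ \norm{T(t)x - x}_X \le c\, \norm{T_1(t)x - x}_{X_1}^{1-\theta}\,\norm{T_2(t)x - x}_{X_2}^{\theta}. \]
The first factor tends to $0$ as $t \downarrow 0$ by strong continuity of $T_1$, whereas the second stays bounded for small $t$ since $T_2$ is exponentially bounded; hence $T(t)x \to x$ in $X$ for all $x$ in the dense subspace $X_1 \cap X_2$. Combined with the local boundedness just obtained, a standard $\epsilon/3$-argument upgrades this to $\lim_{t \downarrow 0} T(t)x = x$ for every $x \in X$, and the semigroup law then yields strong continuity on all of $[0,\infty)$. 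Thus $(T(t))_{t \ge 0}$ is a $C_0$-semigroup.

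For holomorphy I would repeat the computation of Theorem~\ref{thm:interpolation_lp_scale}. Since $T_1$ is holomorphic, the characterization of holomorphy recorded after Theorem~\ref{thm:beurling} provides $f \in \IC[z]$ with $\norm{f}_{\mathbb{D}} = 1$ and $\rho \coloneqq \limsup_{t \downarrow 0}\norm{f(T_1(t))} < 1$. Writing $f^N(z) = \sum_{k=0}^{Nn} a_{k,N}z^k$ with $n = \deg f$, the operator $f^N(T(t)) = \sum_{k=0}^{Nn} a_{k,N} T(kt)$ is again a morphism with restrictions $f^N(T_i(t))$, so~\ref{def:interpolation_space_standard_end} applies. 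Using $\norm{f^N(T_1(t))} \le \norm{f(T_1(t))}^N$, the Cauchy estimate $\abs{a_{k,N}} \le \norm{f^N}_{\mathbb{D}} = 1$, and $\norm{T_2(kt)} \le M \coloneqq \sup_{s \in [0,1]}\norm{T_2(s)}$ valid once $t \le (Nn)^{-1}$, I obtain for all sufficiently small $t$
\[ \norm{f^N(T(t))_{|X}} \le C\,\norm{f(T_1(t))}^{(1-\theta)N}\Big(\sum_{k=0}^{Nn}\abs{a_{k,N}}\,\norm{T_2(kt)}\Big)^{\theta} \le C\,M^{\theta}(Nn+1)^{\theta}\,\norm{f(T_1(t))}^{(1-\theta)N}. \]
Passing to $\limsup_{t \downarrow 0}$ bounds the left-hand side by $C M^{\theta}(Nn+1)^{\theta}\rho^{(1-\theta)N}$, and choosing $N$ large the exponential decay $\rho^{(1-\theta)N}$ dominates the polynomial factor, so this drops strictly below $1 = \norm{f^N}_{\mathbb{D}}$. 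Theorem~\ref{thm:beurling}\ref{thm:beurling_a} applied to $f^N$ then yields holomorphy of $(T(t))_{t \ge 0}$ on $X$. Uniqueness is immediate: any consistent $C_0$-semigroup on $X$ coincides with $(T(t))_{t \ge 0}$ on the dense set $X_1 \cap X_2$, hence everywhere.

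The main obstacle I anticipate is the strong continuity step rather than the holomorphy estimate. The latter is essentially a transcription of the $L^p$ argument, with~\ref{def:interpolation_space_standard_end} and the Cauchy bound in place of Riesz--Thorin. By contrast, verifying the $C_0$-property genuinely requires the regularity hypothesis together with property~\ref{def:interpolation_space_bonus}, and one must take care when passing from right-continuity at $0$ on the dense subspace $X_1 \cap X_2$ to honest strong continuity on all of $X$.
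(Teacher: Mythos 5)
Your proposal is correct and takes essentially the same approach as the paper: the paper also obtains $(T(t))_{t \ge 0}$ by restricting the interpolated morphisms to $X$, proves strong continuity via property~\ref{def:interpolation_space_bonus} on the dense subspace $X_1 \cap X_2$ combined with local boundedness, and then repeats the computation of Theorem~\ref{thm:interpolation_lp_scale} (property~\ref{def:interpolation_space_standard_end} in place of Riesz--Thorin, plus the Cauchy coefficient bound) to apply Theorem~\ref{thm:beurling}\ref{thm:beurling_a} to $f^N$. The only difference is cosmetic: you write out in full the steps the paper disposes of by saying they coincide ``word by word'' with the $L^p$ case.
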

	\begin{proof}
		Of course, $(T(t))_{t \ge 0}$ is obtained by interpolation. We only prove that $(T(t))_{t \ge 0}$ is strongly continuous if $(T_1(t))_{t \ge 0}$ or $(T_2(t))_{t \ge 0}$ is. For then, we have
		\begin{equation*}
			\norm{T(t)x - x}_{X} \le c \norm{T_1(t)x - x}_{X_1}^{1-\theta} \norm{T_2(t)x - x}_{X_2}^{\theta} \to 0 \qquad \text{as } t \downarrow 0
		\end{equation*}
		for some constant $c$ and all $x$ in the dense set $X_1 \cap X_2$. This combined with $\sup_{t \in [0,1]} \norm{T(t)}_{\mathcal{L}(X)} < \infty$ yields the strong continuity of $(T(t))_{t \ge 0}$. The remaining part of the proof coincides word by word with the proof of Theorem~\ref{thm:interpolation_lp_scale}.
	\end{proof}

	\section{A Zero-Two Law for cosine families on UMD-Spaces}
	
	As an application of the Kato-Beurling theorem we prove a zero-two law for strongly continuous cosine families. Recall that a strongly continuous mapping $\Cos: \IR \to \mathcal{L}(X)$ for a Banach space $X$ is called a strongly continuous \emph{cosine family} if it satisfies the following properties:
	\begin{def_enum}
		\item $\Cos(0) = \Id$,
		\item $2\Cos(t)\Cos(s) = \Cos(t+s) + \Cos(t-s) \qquad \text{for all } t,s \ge 0$.
	\end{def_enum}
	In this case there exists a uniquely determined operator $A$ called the \emph{generator} of $\Cos$ with $(\omega^2, \infty) \subset \rho(A)$ for some $\omega > 0$ such that
		\begin{equation} \lambda R(\lambda^2, A) = \int_0^{\infty} e^{-\lambda t} \Cos(t) \, dt \qquad \text{for } \lambda > \omega. \end{equation}
	A cosine family has a bounded generator if and only if $\lim_{t \downarrow 0} \norm{\Cos(t) - \Id} = 0$. In that case, $\Cos: \IR \to \mathcal{L}(X)$ is continuous for the operator norm~\cite[Corollary~3.14.9]{ABHN11}.
	
	Cosine families play an important role in the study of abstract second order Cauchy problems (see for example~\cite[Sec. 3.14]{ABHN11} and~\cite[Ch.~2, Sec.~8]{Gol85}). Cosine families can be built systematically from groups: suppose $(U(t))_{t \in \IR}$  is a $C_0$-group. Then $C(t) = \frac{1}{2}(U(t) + U(-t))$ defines a cosine family whose generator is given by the square of the group generator \cite[Example 3.14.15]{ABHN11}.
	
	Various similar zero-one or zero-two laws have been investigated in the past: for a semigroup $(T(t))_{t \ge 0}$ of bounded linear operators $\limsup_{t \downarrow 0} \norm{T(t) - I} < 1$ implies the uniform continuity of $(T(t))_{t \ge 0}$ and the left hand side to be equal to zero. For a strongly continuous group $(U(t))_{t \in \IR}$ the above statement even remains true if $1$ is replaced by $2$. This is indeed a direct consequence of Theorem~\ref{thm:beurling}: $\limsup_{t \downarrow 0} \norm{U(t) - I} < 2$ implies the holomorphy of $(U(t))_{t \in \IR}$ (choose $f(z) = z - 1$) which yields the uniform continuity of $(U(t))_{t \in \IR}$ (cf. proof of Theorem~\ref{thm:zero-two_law}). Similar laws were also investigated in the more general context of Banach algebras. For this as well as the stated results see~\cite{Est04}.
	
	\begin{theorem}[Zero-Two Law]\label{thm:zero-two_law} 
		Let $C(t) = \frac{1}{2}(U(t) + U(-t))$ be the cosine family induced by a strongly continuous group $(U(t))_{t \in \IR}$. Suppose
			\begin{equation} \limsup_{t \downarrow 0} \norm{C(t) - \Id} < 2. \label{eq:cosine_approx} \end{equation}
		Then $(C(t))_{t \in \IR}$ is uniformly continuous and the left hand side of \eqref{eq:cosine_approx} equals $0$.
	\end{theorem}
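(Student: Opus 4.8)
The plan is to deduce the theorem from the Kato--Beurling theorem applied to the underlying group $(U(t))_{t \in \IR}$ rather than to the cosine family directly. The starting point is the algebraic identity
\[ (U(t) - \Id)^2 = 2\,U(t)\,(C(t) - \Id), \]
which follows at once from $U(t)C(t) = \tfrac12(U(2t) + \Id)$ and the definition of $C$. Since $U(t)$ and $C(t) - \Id$ are both functions of the group and therefore commute, raising this identity to the $N$-th power gives $(U(t) - \Id)^{2N} = 2^N\,U(Nt)\,(C(t) - \Id)^N$, whence
\[ \norm{(U(t) - \Id)^{2N}}^{1/(2N)} \le \sqrt{2}\,\norm{U(Nt)}^{1/(2N)}\,\norm{C(t) - \Id}^{1/2}. \]

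Next I would let $t \downarrow 0$. Writing $2\rho \coloneqq \limsup_{t \downarrow 0} \norm{C(t) - \Id} < 2$, so that $\rho < 1$, and $L \coloneqq \limsup_{t \downarrow 0} \norm{U(t)}$, which is finite because $(U(t))$ is exponentially bounded, the submultiplicativity of the limit superior for nonnegative factors yields $\limsup_{t \downarrow 0} \norm{(U(t) - \Id)^{2N}}^{1/(2N)} \le 2\sqrt{\rho}\,L^{1/(2N)}$. The decisive observation is that the group bound $L$ enters only through the factor $L^{1/(2N)} \to 1$; since $2\sqrt{\rho} < 2$ strictly, for all sufficiently large $N$ we obtain $\limsup_{t \downarrow 0} \norm{(U(t) - \Id)^{2N}} < 2^{2N} = \norm{(z-1)^{2N}}_{\mathbb{D}}$. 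Applying Theorem~\ref{thm:beurling}\ref{thm:beurling_a} to the semigroup $(U(t))_{t \ge 0}$ with $f(z) = (z-1)^{2N}$ then shows that the group extends to a holomorphic semigroup.

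Finally I would exploit that a holomorphic $C_0$-semigroup maps $X$ into the domain of its generator $B$: for every $t > 0$ one has $U(t)X \subseteq D(B)$. Because $(U(t))$ is a group, $U(t)$ is invertible in $\mathcal{L}(X)$ with inverse $U(-t)$, so $U(t)X = X$; combining the two facts forces $D(B) = X$, and the closed graph theorem gives that $B$ is bounded. Consequently $(U(t))_{t \in \IR}$ is uniformly continuous, and hence so is $C(t) = \tfrac12(U(t) + U(-t))$; in particular $\lim_{t \downarrow 0} \norm{C(t) - \Id} = 0$, which is the asserted vanishing of the left-hand side of~\eqref{eq:cosine_approx}. (Note that, in this group formulation, the UMD hypothesis is not actually used; it enters only in reducing a general cosine family on a UMD-space to one of the form $\tfrac12(U(t)+U(-t))$.)

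The routine parts — verifying the identity, the commutation, and the limit-superior estimate — are straightforward; the step carrying the real weight is the realization that taking $N$ large neutralises the group-growth factor $L^{1/(2N)}$, so that the strict inequality $\rho < 1$ is exactly what is needed to reach the hypothesis of Theorem~\ref{thm:beurling}. I expect the conceptual core to be precisely this passage from the cosine family to the group, together with the structural fact that a holomorphic $C_0$-group is automatically uniformly continuous.
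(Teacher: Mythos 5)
Your proposal is correct and follows essentially the same route as the paper: your identity $(U(t)-\Id)^2 = 2U(t)(C(t)-\Id)$ is exactly the paper's choice $f(z)=\frac{1}{2}(z-1)^2$ with $f(U(t)) = U(t)\bigl(C(t)-\Id\bigr)$, and both arguments apply Theorem~\ref{thm:beurling}\ref{thm:beurling_a} to the $N$-th power of this polynomial after observing that the group-growth factor is neutralised by taking $N$ large. The only immaterial difference is the last step, where you deduce uniform continuity from $U(t)X \subseteq D(B)$, invertibility of $U(t)$ and the closed graph theorem, while the paper uses immediate norm continuity together with the shift $U(t)-\Id = U(-1)(U(t+1)-U(1))$.
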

	\begin{proof}
		Assumption \eqref{eq:cosine_approx} means that $\norm{C(t) - \Id} < \rho$ for some $0 < \rho < 2$ and all sufficiently small $t$, say $0 < t < t_0$. Let $f(z) = \frac{1}{2}(z-1)^2$. Then $\norm{f^N}_{\mathbb{D}} = f^N(-1) = 2^N$ for all $N \in \IN$. Further, we observe that if $M \ge 0$ and $\omega \in \IR$ are chosen such that $\norm{U(t)} \le Me^{\omega t}$ for $t \ge 0$, then
		\begin{align*} 
			\norm{f^N(U(t))} & = \norm{\left(U(t) \left[ \frac{U(t) + U(-t)}{2} - \Id \right] \right)^N} \\
			& \le \norm{U(Nt)} \norm{\left[ \frac{U(t) + U(-t)}{2} - \Id \right]^N} \le M e^{\omega Nt} \rho^N \\
			& \le (M^{1/N} e^{\omega t} \rho)^{N} \le \tilde{\rho}^N < \norm{f^N}_{\mathbb{D}}
		\end{align*}
		for every $0 < \rho < \tilde{\rho} < 2$ provided $N$ is big and $t$ is small enough. Hence, Theorem~\ref{thm:beurling} applied to $f^N$ yields the holomorphy of $(U(t))_{t \in \IR}$. It is well-known that every holomorphic group is even uniformly continuous: $(U(t))_{t \ge 0}$ is immediately norm continuous because it is holomorphic, so
		\begin{equation*}
			U(t) - \Id = U(-1)(U(t+1) - U(1)) \to 0 \qquad \text{in operator norm as } t \downarrow 0. \qedhere
		\end{equation*}
	\end{proof}
	
	In particular, for cosine families on a UMD-space (see~\cite{Bur01} for details) we obtain the following corollary. It solves partially the problem raised by W. Arendt whether a zero-two law does hold for cosine families as well.
	
	\begin{corollary}[Zero-Two Law for Cosine Families on UMD-spaces] Let $\Cos = (C(t))_{t \in \IR}$ be a strongly continuous cosine family on a UMD-space such that \eqref{eq:cosine_approx} holds. Then $\Cos$ is uniformly continuous and the left hand side of \eqref{eq:cosine_approx} equals $0$.
	\end{corollary}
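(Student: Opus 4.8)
The plan is to reduce the assertion to the group-generated situation already handled in Theorem~\ref{thm:zero-two_law}. That theorem settles precisely those cosine families of the form $C(t) = \frac{1}{2}(U(t) + U(-t))$ with $(U(t))_{t \in \IR}$ a strongly continuous group, so the only thing missing is to realise an arbitrary strongly continuous cosine family on a UMD-space in this form. Denoting by $A$ the generator of $\Cos$, I would invoke the group reduction for cosine families available on UMD-spaces: there one has $A = B^2$ for the generator $B$ of a $C_0$-group $(U(t))_{t \in \IR}$, and the cosine family generated by $B^2$ is exactly $t \mapsto \frac{1}{2}(U(t) + U(-t))$. By uniqueness of the cosine family attached to a given generator this forces $C(t) = \frac{1}{2}(U(t) + U(-t))$ for all $t \in \IR$.

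Once this representation is secured the corollary is immediate. Hypothesis~\eqref{eq:cosine_approx} involves only the operators $C(t)$ and hence carries over unchanged to $\frac{1}{2}(U(t) + U(-t))$; applying Theorem~\ref{thm:zero-two_law} then yields at once that $\Cos$ is uniformly continuous and that the left-hand side of~\eqref{eq:cosine_approx} equals $0$. No new estimate has to be produced, since all the analytic work was already carried out in the proof of Theorem~\ref{thm:zero-two_law}, which in turn only used the exponential bound $\norm{U(t)} \le M e^{\omega t}$ automatically enjoyed by any $C_0$-group.

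Consequently the entire burden of the proof --- and the sole point at which the UMD-property is used --- lies in the group reduction step, and this is where I expect the main obstacle. The delicate issue is that the classical group reduction is typically formulated for \emph{bounded} cosine families, whereas a strongly continuous cosine family is in general only exponentially bounded, $\norm{C(t)} \le M e^{\omega \abs{t}}$. One route is to first pass to a bounded cosine family (the needed group only has to exist, not to be bounded, as observed above) and then apply the reduction theorem; alternatively one cites an exponentially bounded version of the reduction valid on UMD-spaces. Checking that the reduction genuinely survives this normalisation on a general UMD-space is the technical heart of the matter, while the passage to Theorem~\ref{thm:zero-two_law} and its consequences are then a direct citation of the group case.
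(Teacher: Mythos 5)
Your overall strategy --- reduce to Theorem~\ref{thm:zero-two_law} via a group representation of cosine families on UMD-spaces --- is the same as the paper's, but your reduction step contains a genuine gap. Fattorini's theorem in the form valid on UMD-spaces (\cite[Corollary~3.16.8]{ABHN11}, which is what the paper invokes) does \emph{not} give $A = B^2$; it gives $A = B^2 + \omega$ for some $\omega \ge 0$ and a group generator $B$. Consequently the cosine family $B(t) = \frac{1}{2}(U(t) + U(-t))$ generated by $B^2$ is in general \emph{not} equal to $C(t)$, and your claim that hypothesis~\eqref{eq:cosine_approx} ``carries over unchanged'' to the group-generated family is exactly the point that needs proof. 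Your suggested fix --- first pass to a bounded cosine family and then apply the reduction --- runs into the same problem: shifting the generator from $A$ to $A - \omega$ replaces $C(t)$ by a different cosine family, and nothing in your argument shows that the smallness condition~\eqref{eq:cosine_approx} survives this shift. You also misidentify the delicate point: it is not a bounded-versus-exponentially-bounded issue for the cosine family, but the spectral shift $\omega$, which is present in the cited theorem regardless of boundedness.

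The paper closes precisely this gap using Fattorini's perturbation series: with $S(t) = \int_0^t C(s)\,ds$, $C_0(t) = C(t)$ and $C_n(t) = \int_0^t S(t-s)C_{n-1}(s)\,ds$, one has $B(t) = \sum_{n=0}^{\infty} (-\omega)^n C_n(t)$ strongly, together with the bounds $\norm{C_n(t)} \le M e^{\omega t}\, t^{2n}/(2n)!$. Hence the correction terms vanish as $t \downarrow 0$, so that
\begin{equation*}
	\limsup_{t \downarrow 0} \norm{B(t) - \Id} \le \limsup_{t \downarrow 0} \Big( \norm{C(t) - \Id} + M e^{\omega t} \sum_{n=1}^{\infty} \frac{(\omega t^{2})^n}{(2n)!} \Big) < 2,
\end{equation*}
and only now can Theorem~\ref{thm:zero-two_law} be applied --- to $B(t)$, not to $C(t)$ --- giving boundedness of $B^2$, hence of $A = B^2 + \omega$, which is equivalent to the uniform continuity of $\Cos$ by \cite[Corollary~3.14.9]{ABHN11}. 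If you want to keep your cleaner identity $C(t) = \frac{1}{2}(U(t)+U(-t))$, you would have to prove that the shift $\omega$ can always be absorbed into the square root on a general UMD-space; that is a stronger statement than the cited theorem and is not obvious, so some transfer argument of the above kind is unavoidable.
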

	\begin{proof}
		Let $A$ denote the generator of $\Cos$. Since $A$ is defined on a UMD-space, by Fattorini's theorem there exists a generator $B$ of a $C_0$-group $(U(t))_{t \in \IR}$ and $\omega \ge 0$ such that $A = B^2 + \omega$ \cite[Corollary~3.16.8]{ABHN11}. Let $B(t) = \frac{1}{2}(U(t) + U(-t))$ be the cosine family generated by $B^2$. It is shown in~\cite[Lemma~6.1]{Fat69} that from the iteration given by
			\[ C_0(t) = C(t), \quad C_n(t) = \int_0^t S(t-s) C_{n-1}(s) \, ds \]
		in the strong sense, where $S(t) \coloneqq \int_0^t C(s) \, ds$ is the associated sine function, one obtains $B(t)$ strongly as the series $B(t) = \sum_{n=0}^{\infty} (-\omega)^n C_n(t)$. Moreover, for all $n \in \IN$ one has $\norm{C_n(t)} \le Me^{\omega t} \frac{t^{2n}}{(2n)!}$ for some constants $M \ge 1$ and $\omega \ge 0$. Consequently, we obtain
			\begin{align*}
				\limsup_{t \downarrow 0} \norm{B(t) - \Id} \le \limsup_{t \downarrow 0} \left( \norm{C(t) - \Id} + M e^{\omega t}\sum_{n=1}^{\infty} \frac{(\omega t^{2})^n}{(2n)!} \right) < 2.
			\end{align*}
		Hence, by Theorem~\ref{thm:zero-two_law}, $B^2$ and therefore $A$ are bounded operators which in turn is equivalent to the claim \cite[Corollary~3.14.9]{ABHN11}.
	\end{proof}
	
	\section{The Kato-Beurling theorem for \texorpdfstring{$\mathcal{R}$}{R}-analytic semigroups}\label{sec:r-analytic}
	
	In this section we extend the above method from holomorphic to $\mathcal{R}$-analytic semigroups, a concept which is intimately connected with the problem of maximal regularity. We recall briefly the main concepts, definitions and theorems. 
	
	Let $X$ be a Banach space. A family of operators $\mathcal{T} \subseteq \mathcal{L}(X)$ is called $\mathcal{R}_p$-bounded $(1 \le p < \infty)$ if there exists a finite constant $C_p \ge 0$ such that for each finite subset $\{T_1, \ldots, T_n \}$ of $\mathcal{T}$ and arbitrary $x_1, \ldots, x_n$ one has
		\begin{equation} 
			\norm{\sum_{k = 1}^n r_k T_k x_k}_{L^p([0,1]; X)} \le C_p \norm{\sum_{k=1}^n r_k x_k}_{L^p([0,1]; X)}, \label{eq:R-ineq} 
		\end{equation}
	where $r_k(t) \coloneqq \sign \sin (2^k \pi t)$ denotes the $k$-th \emph{Rademacher function}. The best constant $C_p$ such that \eqref{eq:R-ineq} holds is called the $\mathcal{R}_p$-bound of $\mathcal{T}$ and is denoted by $R_p(\mathcal{T})$. 
	
	Let $\Rad_p(X)$ be the Banach space of sequences $(x_k)_{k \in \IN}$ in $X$ such that $\sum_{k=1}^{\infty} r_k x_k$ converges in $L^p([0,1]; X)$ with the induced norm from $L^p([0,1]; X)$. Observe that a sequence of operators $(T_k)_{k \in \IN} \subset \mathcal{L}(X)$ is $\mathcal{R}_p$-bounded if and only if the map $\mathcal{T}((x_k)_k) = (T_k x_k)_k$ extends to a bounded operator on $\Rad_p(X)$. In this case one has $\norm{\mathcal{T}} = \R[p]{T_n: n \in \IN}$.
	
	The property of being $\mathcal{R}_p$-bounded (but not the constants $C_p$) and therefore the spaces $\Rad_p(X)$ are independent of $p$ by Kahane's inequality \cite[Theorem~2.4]{KunWei04} and therefore the subindex is often omitted (in the following all identities are however only valid for a fixed $p$). The $\mathcal{R}$-bound behaves in many ways similar to a classical norm. For example, if $\mathcal{S}$ is a second family of operators, one sees that (if the operations make sense)
		\[ \mathcal{R}_p(\mathcal{T} + \mathcal{S}) \le \mathcal{R}_p(\mathcal{T}) + \mathcal{R}_p(\mathcal{S}), \qquad \mathcal{R}_p(\mathcal{TS}) \le \mathcal{R}_p(\mathcal{T})\mathcal{R}_p(\mathcal{S}). \]
	Note that a family $\mathcal{T} \subseteq \mathcal{L}(H)$ for some Hilbert space $H$ is $\mathcal{R}$-bounded if and only if $\mathcal{T}$ is bounded in operator norm. Moreover, the Kahane contraction principle~\cite[Propostion~2.5]{KunWei04} says 
		\[ \norm{\sum_{k=1}^n r_k a_k x_k}_{L^p(X)} \le 2 \max \abs{a_k} \norm{\sum_{k=1}^n r_k x_k}_{L^p(X)} \] 
	for $a_1, \ldots, a_n \in \IC$ ($2$ can be replaced by $1$ if $a_1, \ldots, a_n$ are real).
	
	A semigroup $(T(t))_{t \ge 0}$ is called $\mathcal{R}$-analytic if it is holomorphic and there exists a sector $\Sigma_{\delta} \coloneqq \{ z \neq 0: \abs{\arg z} < \delta \} \subset \IC$ ($\delta > 0)$ such that $\R{T(z): z \in \Sigma_{\delta}, \abs{z} \le 1} < \infty$. $\mathcal{R}$-analytic semigroups are studied because of their applications to maximal regularity. One says that the generator $A$ of a semigroup has $\emph{maximal regularity}$ if for one (equivalently: each) $p \in (1, \infty)$ and one $\tau > 0$ (equivalently all $\tau > 0$) the mild solution $x(t) = \int_0^t T(t-s)f(s) \, ds$ of the Cauchy problem $\dot{x} = Ax + f$ with initial condition $x(0) = 0$ is differentiable a.e., $x(t) \in D(A)$ a.e. and $\dot{x}$ and $Ax$ belong to $L^p([0,\tau); X)$. Maximal regularity has important applications in the study of non-autonomous evolution equations and quasi-linear partial differential equations. The connection between the two concepts is the following theorem due to L. Weis~\cite[1.11 \& 1.12]{KunWei04}.
	
	\begin{theorem}[Weis]
		On a UMD-space the generator $A$ of a strongly continuous semigroup $(T(t))_{t \ge 0}$ has maximal regularity if and only if $(T(t))_{t \ge 0}$ is $\mathcal{R}$-analytic.
	\end{theorem}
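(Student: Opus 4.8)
Since the final statement is L.~Weis's characterisation of maximal regularity, the natural route is through operator-valued Fourier multipliers. The plan is to reduce maximal regularity to the boundedness of a single multiplier and to identify the multiplier condition with $\mathcal{R}$-boundedness of the boundary resolvents. After a rescaling and a shift $A \rightsquigarrow A - \omega$ (which alters neither maximal regularity on finite intervals nor $\mathcal{R}$-analyticity) we may assume that $A$ generates a bounded holomorphic semigroup with $0 \in \rho(A)$, so that the resolvents $(is - A)^{-1}$, $s \in \IR \setminus \{0\}$, exist. Writing the mild solution of $\dot{x} = Ax + f$ as a convolution and applying the Fourier transform on $L^p(\IR; X)$, one sees that $A$ has maximal regularity if and only if the symbol $m(s) \coloneqq A(is - A)^{-1} = -\Id + is(is-A)^{-1}$ defines a bounded Fourier multiplier on $L^p(\IR; X)$. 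The central tool is then the operator-valued Mikhlin multiplier theorem on UMD-spaces: if $m \in C^1(\IR \setminus \{0\}; \mathcal{L}(X))$ is such that both $\{m(s) : s \neq 0\}$ and $\{s m'(s) : s \neq 0\}$ are $\mathcal{R}$-bounded, then $m$ is a bounded multiplier for every $1 < p < \infty$.

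For the implication ``$\mathcal{R}$-analytic $\Rightarrow$ maximal regularity'' I would first upgrade the $\mathcal{R}$-boundedness of $\{T(z) : z \in \Sigma_{\delta}, \abs{z} \le 1\}$ to that of the resolvent family $\{is(is-A)^{-1} : s \neq 0\}$. This passes through the Laplace representation $\lambda R(\lambda, A) = \int_0^{\infty} \lambda e^{-\lambda t} T(t)\, dt$: deforming the contour into $\Sigma_{\delta}$ and using that $\mathcal{R}$-boundedness is inherited by operators of the form $\int T(z(t)) g(t)\, dt$ with $g \in L^1$ (the integral form of the Kahane contraction principle, equivalently the fact that the closed absolutely convex hull of an $\mathcal{R}$-bounded set is again $\mathcal{R}$-bounded) yields that $\{\lambda R(\lambda, A) : \lambda \in \Sigma_{\pi/2 + \epsilon}\}$ is $\mathcal{R}$-bounded for some $\epsilon > 0$, and restricting to $\lambda = is$ gives the claim. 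The Mikhlin conditions are then immediate: $\{m(s)\}$ differs from the resolvent family only by the summand $-\Id$, while the computation $s m'(s) = is(is-A)^{-1} - \left(is(is-A)^{-1}\right)^2$ exhibits $s m'(s)$ as a fixed polynomial in the family $N(s) \coloneqq is(is-A)^{-1}$, so its $\mathcal{R}$-boundedness follows from $\mathcal{R}(\mathcal{TS}) \le \mathcal{R}(\mathcal{T})\mathcal{R}(\mathcal{S})$. The multiplier theorem then delivers boundedness of $m$, i.e. maximal regularity.

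For the converse ``maximal regularity $\Rightarrow$ $\mathcal{R}$-analytic'' I would run this in reverse. Maximal regularity says precisely that $m(s) = A(is-A)^{-1}$ is a bounded multiplier on $L^p(\IR; X)$; a necessity lemma for operator-valued multipliers (testing $m$ on functions of modulated form $t \mapsto \sum_k e^{i s_k t}\phi(t)\, x_k$ and using Fubini together with the definition of the $\mathcal{R}$-bound) shows that the range $\{m(s) : s \neq 0\}$, and hence $\{is(is-A)^{-1}\}$, is $\mathcal{R}$-bounded. A Cauchy-integral argument then converts this back into $\mathcal{R}$-analyticity: representing $T(z) = \frac{1}{2\pi i}\int_{\Gamma} e^{\lambda z} R(\lambda, A)\, d\lambda$ over a sectorial contour $\Gamma$ and invoking once more the integral Kahane principle shows $\{T(z) : z \in \Sigma_{\delta}, \abs{z} \le 1\}$ to be $\mathcal{R}$-bounded. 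That maximal regularity already forces holomorphy of the semigroup is the classical observation of de~Simon and Dore and may be taken for granted here.

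The step I expect to be the genuine obstacle --- and the only place where the UMD hypothesis is really used --- is the operator-valued Mikhlin multiplier theorem itself; its proof rests on the boundedness of the vector-valued Hilbert transform, equivalently on a Littlewood--Paley decomposition available precisely on UMD-spaces, and is far deeper than the bookkeeping with $\mathcal{R}$-bounds that surrounds it. Everything else reduces to the elementary stability properties of $\mathcal{R}$-boundedness (sums, products, absolutely convex hulls) recalled in this section.
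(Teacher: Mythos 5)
Your outline is correct and reconstructs exactly the argument behind the result the paper itself does not prove but simply quotes from \cite[Theorems~1.11 \& 1.12]{KunWei04}: reduction of maximal regularity to boundedness of the Fourier multiplier $s \mapsto A(is-A)^{-1}$, Weis' operator-valued Mikhlin theorem on UMD-spaces for the sufficiency of $\mathcal{R}$-analyticity, and for necessity the standard lemma that a bounded operator-valued multiplier has $\mathcal{R}$-bounded range, combined with the contour-integral passage between $\mathcal{R}$-sectoriality and $\mathcal{R}$-boundedness of $\{T(z): z \in \Sigma_{\delta}, \abs{z}\le 1\}$. Since the paper's ``proof'' is this citation and your sketch follows the cited source's route (including correctly isolating the multiplier theorem as the only place where UMD enters), this is essentially the same approach.
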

	
	For a detailed exposition of maximal regularity we refer the reader to \cite{KunWei04} and \cite{DHP03}.
	
	We now generalize Lemma~\ref{lem:kato} and Theorem~\ref{thm:beurling} to the $\mathcal{R}$-analytic case (we show the ``only if''-part for the sake of completeness).
	
	\begin{lemma}\label{lem:kato_R} Let $(T(t))_{t \ge 0}$ be a $C_0$-semigroup. Then $(T(t))_{t \ge 0}$ is an $\mathcal{R}$-analytic semigroup if and only if there exist constants $\abs{\zeta} = 1$, $t_0 > 0$ and $K > 0$ such that $\R{T(t): 0 < t < t_0} < \infty$ and
		\begin{equation*} \zeta \in \rho(T(t)) \text{ for all } 0 < t < t_0 \quad \text{and} \quad \R{(\zeta - T(t))^{-1}: 0 < t < t_0} \le K. \end{equation*}
	In this case the above condition holds for all $\abs{\zeta} \ge 1$, $\zeta \neq 1$.
		
	\end{lemma}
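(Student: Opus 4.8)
The plan is to mirror the proof of Kato's Lemma~\ref{lem:kato} step by step, replacing every operator-norm estimate by the corresponding $\mathcal{R}$-bound. Two structural properties of $\mathcal{R}$-bounds carry the argument. First, submultiplicativity and subadditivity (recorded above), which for families indexed by the same parameter $\alpha$ give $\R{S_\alpha T_\alpha : \alpha} \le \R{S_\alpha : \alpha}\,\R{T_\alpha : \alpha}$. Second, stability under integral averaging: if $R_0 \coloneqq \R{T(s) : 0 < s < t_0} < \infty$, then for a scalar kernel $k$ the strong integral $\int_0^t k(s)T(s)\,ds$ lies in $\norm{k}_{L^1(0,t)}$ times the closed absolutely convex hull of $\{T(s) : 0 < s < t_0\}$, so the family of such integrals has $\mathcal{R}$-bound at most $2\norm{k}_{L^1(0,t)}R_0$ by the Kahane contraction principle. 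To conclude $\mathcal{R}$-analyticity from a resolvent estimate I use the $\mathcal{R}$-analogue of \cite[Corollary~3.7.18]{ABHN11}: holomorphy already follows from Lemma~\ref{lem:kato} (since $\mathcal{R}$-bounds dominate norms), and the $\mathcal{R}$-boundedness of $\{T(z)\}$ on a sector is then obtained by extending an imaginary-axis $\mathcal{R}$-resolvent bound into a sector via the resolvent Neumann series and representing $T(z) = \frac{1}{2\pi i}\int_\Gamma e^{z\lambda}R(\lambda, A)\,d\lambda$, whose $\mathcal{R}$-bound is controlled by the averaging principle applied to $\{\lambda R(\lambda, A)\}$.

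For the ``if'' direction I take over Kato's computation verbatim. Fixing $\theta$ with $e^{i\theta} = \zeta$ and putting $t = \theta/\alpha$, the identity from Lemma~\ref{lem:kato} yields $\alpha(A - i\alpha)^{-1} = -\zeta\,(\zeta - T(t))^{-1}[\alpha\int_0^t e^{-is\alpha}T(s)\,ds]$ for $\abs{\alpha} \ge \abs{\theta}/t_0$. As $\alpha$ ranges over this set, $t = \theta/\alpha$ ranges over $(0,t_0)$, so the first factor is drawn from $\{(\zeta - T(t))^{-1} : 0 < t < t_0\}$, with $\mathcal{R}$-bound at most $K$. The kernel of the bracketed factor has $L^1$-mass $\int_0^t \abs{\alpha e^{-is\alpha}}\,ds = \abs{\theta}$, independent of $\alpha$, so the averaging principle bounds its $\mathcal{R}$-bound by $2\abs{\theta}R_0$. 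Submultiplicativity then gives $\R{\alpha(A - i\alpha)^{-1} : \abs{\alpha} \ge \alpha_0} \le 2\abs{\theta}K R_0 < \infty$, and the $\mathcal{R}$-version of \cite[Corollary~3.7.18]{ABHN11} finishes the proof; negative $\alpha$ are handled with $\tilde{\theta} < 0$, $e^{i\tilde{\theta}} = \zeta$, exactly as in Lemma~\ref{lem:kato}.

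For the ``only if'' direction, $\R{T(t) : 0 < t < t_0} < \infty$ is immediate from the definition of $\mathcal{R}$-analyticity once $t_0 \le 1$. Now fix $\zeta$ with $\abs{\zeta} \ge 1$, $\zeta \ne 1$. Since an $\mathcal{R}$-analytic semigroup is holomorphic, the spectral mapping theorem gives $\sigma(T(t)) \setminus \{0\} = e^{t\sigma(A)}$; the preimages of $\zeta$ under $e^{t(\cdot)}$ have modulus tending to $\infty$ as $t \downarrow 0$ (purely imaginary if $\abs{\zeta} = 1$, with large positive real part if $\abs{\zeta} > 1$) and hence lie in $\rho(A)$, so $\zeta \in \rho(T(t))$ for all sufficiently small $t$. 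When $\abs{\zeta} > 1$ I expand $(\zeta - T(t))^{-1} = \zeta^{-1}\sum_{n \ge 0}\zeta^{-n}T(nt)$ and estimate termwise, using that $\R{T(s) : 0 < s \le S} < \infty$ for each finite $S$ (write $T(s) = T(s/m)^m$ and apply submultiplicativity); choosing $t_0$ small relative to $\log\abs{\zeta}$ and the $\mathcal{R}$-growth of the semigroup makes the resulting geometric series converge and supplies the bound $K$.

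The main obstacle is the boundary case $\abs{\zeta} = 1$, where this Neumann series is no longer summable. Here I would pass through the generator: writing $B = A - i\alpha$ with $e^{it\alpha} = \zeta$ one has $(\zeta - T(t))^{-1} = \zeta^{-1}(\Id - e^{tB})^{-1}$, and the Mittag-Leffler expansion of $z \mapsto (1 - e^{tz})^{-1}$ expresses this operator through the resolvents $R(i(\theta + 2\pi k)/t, A)$, $k \in \IZ$, which all sit on the imaginary axis with $\abs{(\theta + 2\pi k)/t} \ge \alpha_0$ for $t$ small. Each is $\mathcal{R}$-bounded by $Ct/\abs{\theta + 2\pi k}$ via the forward estimate already obtained, but the sum of these individual bounds diverges harmonically, so the triangle inequality is useless: the estimate must exploit the cancellation in the series. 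I expect this to be the genuinely technical point, resolved by treating $\sum_k R(i(\theta + 2\pi k)/t, A)$ as an operator-valued Fourier multiplier whose $\mathcal{R}$-boundedness is deduced from the $\mathcal{R}$-bounds on $\{i\beta R(i\beta, A)\}$ through a Mikhlin- or Marcinkiewicz-type argument.
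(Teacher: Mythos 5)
Your ``if'' direction is sound and is essentially the paper's argument: Kato's resolvent identity, the averaging principle for strong integrals of an $\mathcal{R}$-bounded family against an $L^1$ kernel (the paper cites \cite[Corollary~2.14]{KunWei04} for this), and the passage from an $\mathcal{R}$-bound on $\{\alpha(A-i\alpha)^{-1} : \abs{\alpha} > \alpha_0\}$ to $\mathcal{R}$-analyticity, which the paper simply quotes as \cite[Theorem~2.20]{KunWei04} rather than re-proving as you sketch.

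The genuine gap is in the ``only if'' direction, and it sits exactly at the case the lemma requires. The statement demands a $\zeta$ with $\abs{\zeta}=1$ (and the addendum demands the conclusion for all $\abs{\zeta}\ge 1$, $\zeta\ne 1$); your Neumann-series argument only covers $\abs{\zeta}>1$, which cannot feed back into the equivalence, since the sufficiency proof needs $\zeta = e^{i\theta}$ on the unit circle. For $\abs{\zeta}=1$ you concede the point and propose a Mittag--Leffler expansion over the resolvents $R(i(\theta+2\pi k)/t, A)$ handled by an operator-valued Mikhlin/Marcinkiewicz argument. Besides being left unexecuted, that route is unavailable here: operator-valued multiplier theorems of this type need UMD-type geometric assumptions on $X$, whereas the lemma is stated for an arbitrary Banach space. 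The paper avoids the harmonic-divergence problem altogether by not staying on the imaginary axis: following Kato's original paper, for any $\abs{\zeta}\ge 1$, $\zeta\ne 1$ one chooses a contour $\Gamma$ inside the sector of $\mathcal{R}$-sectoriality with $d \coloneqq \inf_{z\in\Gamma}\abs{e^z-\zeta}>0$, sets $B(t) = \frac{1}{2\pi i}\int_{\Gamma} \frac{e^{z}}{e^{z}-\zeta}\, t^{-1}R(z/t,A)\,dz$, and uses the functional-calculus identity $R(\zeta,T(t)) = \zeta^{-1}(\Id - B(t))$. Since $\{(\lambda-\omega)R(\lambda,A):\lambda\in\Sigma_{\sigma}\}$ is $\mathcal{R}$-bounded for some $\sigma>\frac{\pi}{2}$ and the scalar kernel $e^z/(z(e^z-\zeta))$ is absolutely integrable on $\Gamma$, the same averaging principle you already use in the forward direction yields $\R{B(t): 0 < t < t_0} < \infty$ --- no cancellation and no multiplier theorem are needed. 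Replacing your necessity argument by this contour representation closes the gap; it also subsumes your $\abs{\zeta}>1$ case and your separate spectral-mapping step, since the identity itself exhibits $\zeta\in\rho(T(t))$ together with the required $\mathcal{R}$-bound.
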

	\begin{proof} 
		First assume that $(T(t))_{t \ge 0}$ satisfies the above condition. Using the same notation as in the proof of Lemma~\ref{lem:kato} (which we use freely without further notice), we obtain by the stronger assumptions in this theorem that for $t\alpha = \theta$ and $\alpha > \frac{\theta}{t_0}$
			\[ (A - i\alpha)^{-1} = -e^{it \alpha}(\zeta - T(t))^{-1} \int_0^t e^{-is \alpha} T(s) \, ds. \]
		Again, making essentially the same estimate as before, we obtain that for $\alpha_1, \ldots, \alpha_n$ with $t_k \alpha_k = \theta$ and $\alpha_k > \frac{\theta}{t_0}$ ($1 \le k \le n$)
			\begin{align*} 
				\R{\alpha_k(A - i\alpha_k)^{-1}: 1 \le k \le n} & \le K \R{\int_0^{t_k} \alpha_k e^{-is \alpha_k} T(s) \, ds} \\
				& = K \R{ \int_0^{\theta} e^{-is} T\left(\frac{s}{\alpha_k} \right) \, ds }.
			\end{align*}
		Hence, using the fact that the $\mathcal{R}$-bound on the right hand side is finite as the strong integral of an $L^1$-function with an $\mathcal{R}$-bounded set~\cite[Corollary~2.14]{KunWei04}, we obtain
		\[ \R{ \alpha(A - i\alpha)^{-1}: \alpha > \frac{\theta}{t_0}} \le K\theta \R{T(t): 0 < t < t_0}. \]
		As before, the same works for negative $\alpha$. Thus there exists an $\alpha_0 \ge 0$ such that $\R{\alpha(A - i\alpha)^{-1}: \abs{\alpha} > \alpha_0} < \infty$. By \cite[Theorem~2.20]{KunWei04}, $(T(t))_{t \ge 0}$ is $\mathcal{R}$-analytic.
		
		Conversely, let $(T(z))_{z \in \Sigma}$ be $\mathcal{R}$-analytic. Then $\{(\lambda - \omega) R(\lambda, A): \lambda \in \Sigma_{\sigma} \}$ is $\mathcal{R}$-bounded for some $\omega$ and $\sigma > \frac{\pi}{2}$. Let $\abs{\zeta} \ge 1$, $\zeta \neq 1$. As shown in Kato's proof in~\cite{Kat70}, one can choose a path $\Gamma$ such that for $d \coloneqq \inf_{z \in \Gamma} \abs{e^z - \zeta} > 0$
			\[ B(t) = \frac{1}{2\pi i} \int_{\Gamma} \frac{e^{z}}{e^{z} - \zeta} t^{-1} R\left(\frac{z}{t}, A\right) \, dz \]
		is a bounded operator for sufficiently small $t \le t_0$. By functional calculus, it is then shown that $R(\zeta, T(t)) = \zeta^{-1} (\Id - B(t))$. In order to finish the proof it therefore remains to show that $\R{B(t): 0 < t < t_0} < \infty$. For $x_1, \ldots, x_n \in X$ and $t_1, \ldots, t_n \in [0, t_0)$ we have (provided $t_0$ is chosen small enough)
			\begin{align*} 
				\MoveEqLeft \norm{\sum_{k=1}^n r_k B(t_k)x_k}_{L^p(X)} \le \frac{1}{2\pi} \int_{\Gamma} \abs{\frac{e^z}{z(e^z - \zeta)}} \norm{\sum_{k=1}^n r_k \frac{z}{t_k} R\left(\frac{z}{t_k}, A\right)x_k}_{L^p(X)} \abs{dz} \\
				& \le \frac{1}{\pi d} \sup_{(t,z) \in [0, t_0] \times \Gamma} \abs{\frac{z}{z - t\omega}} \int_{\Gamma} \abs{\frac{e^z}{z}} \norm{\sum_{k=1}^n r_k \left( \frac{z}{t_k} - \omega \right) R \left( \frac{z}{t_k}, A \right) x_k}_{L^p(X)} \, \abs{dz} \\
				& \le C_1 \int_{\Gamma} \abs{\frac{e^z}{z}} \, \abs{dz} \norm{\sum_{k=1}^n r_k x_k}_{L^p(X)} \le C_2 \norm{\sum_{k=1}^n r_k x_k}_{L^p(X)}. \qedhere
			\end{align*}
	\end{proof}
	
	In the proof of the next theorem we use the following lemma which follows from the Bernstein inequality. Its proof can be found in \cite[p.~398]{beu70}.
	
	\begin{lemma}\label{lem:bernstein_ineq}
		Let $f$ be a trigonometric polynomial of degree $n$ with $\abs{f(x)} \le 1$ for all $x \in \IR$. Then for $N \in \IN$ and $x \in \IR$
			\[ \abs{\left(\frac{d}{dx}\right)^l f^N(x)} \le 	\begin{cases}
											(Nn)^l \abs{f(x)}^{N-l} & \text{if } l \le N, \\
											(Nn)^l & \text{else}.
										\end{cases}
			\]
	\end{lemma}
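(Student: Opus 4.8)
The plan is to reduce everything to \emph{Bernstein's inequality}, which states that a trigonometric polynomial $p$ of degree $m$ satisfies $\sup_{x \in \IR} \abs{p'(x)} \le m \sup_{x \in \IR} \abs{p(x)}$. Since $f$ has degree $n$, the power $f^N$ is a trigonometric polynomial of degree at most $Nn$ with $\sup_{x \in \IR} \abs{f^N(x)} \le 1$. Applying Bernstein's inequality $l$ times therefore yields $\sup_{x \in \IR} \abs{(d/dx)^l f^N(x)} \le (Nn)^l$, which already settles the uniform bound in the case $l > N$ (and a fortiori the weaker estimate in the case $l \le N$). It remains to produce the sharper pointwise factor $\abs{f(x)}^{N-l}$ when $l \le N$.

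To this end I would factor out the expected power of $f$. I claim that for $0 \le l \le N$ one can write $(d/dx)^l f^N = f^{N-l} g_l$, where the $g_l$ are trigonometric polynomials defined recursively by $g_0 = 1$ and
\begin{equation*}
	g_{l+1} = (N-l) f' g_l + f g_l'.
\end{equation*}
This is verified by a one-line induction: differentiating $f^{N-l} g_l$ and extracting the common factor $f^{N-l-1}$ gives exactly $f^{N-l-1}\left[(N-l) f' g_l + f g_l'\right]$. A second, parallel induction shows that $\deg g_l \le ln$, since $g_0$ is constant and passing from $g_l$ to $g_{l+1}$ raises the degree by at most $n$.

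The crucial step is the estimate $\sup_{x \in \IR} \abs{g_l(x)} \le (Nn)^l$, and here the constants must match exactly. Using $\sup \abs{f} \le 1$, Bernstein's inequality for $f$ in the form $\sup \abs{f'} \le n$, and Bernstein's inequality for $g_l$ (which has degree at most $ln$) in the form $\sup \abs{g_l'} \le ln \sup \abs{g_l}$, the recursion yields
\begin{equation*}
	\sup_{x \in \IR} \abs{g_{l+1}(x)} \le (N-l)\, n \sup_{x \in \IR} \abs{g_l(x)} + ln \sup_{x \in \IR} \abs{g_l(x)} = Nn \sup_{x \in \IR} \abs{g_l(x)}.
\end{equation*}
The point is that the two contributions combine through $(N-l) + l = N$ to give precisely the factor $Nn$; starting from $\sup \abs{g_0} = 1$, induction gives $\sup \abs{g_l} \le (Nn)^l$. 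Combining this with the factorization produces $\abs{(d/dx)^l f^N(x)} = \abs{f(x)}^{N-l} \abs{g_l(x)} \le (Nn)^l \abs{f(x)}^{N-l}$, as claimed. The only real obstacle is guessing the right recursion for $g_l$; once the factorization $(d/dx)^l f^N = f^{N-l} g_l$ is in place, the exact matching of constants falls out of the cancellation $(N-l)+l=N$, and the rest is bookkeeping with Bernstein's inequality.
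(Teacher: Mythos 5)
Your proof is correct. Note that the paper itself never writes out a proof of this lemma: it only remarks that the statement ``follows from the Bernstein inequality'' and defers to Beurling's original article \cite[p.~398]{beu70}. Your argument supplies exactly the missing derivation, and it does so in the spirit the paper indicates. The factorization $\left(\frac{d}{dx}\right)^l f^N = f^{N-l} g_l$ with the recursion $g_{l+1} = (N-l)f'g_l + fg_l'$ is verified correctly; the degree bound $\deg g_l \le ln$ is what licenses the Bernstein step $\sup\abs{g_l'} \le ln \sup\abs{g_l}$, and the cancellation $(N-l)n + ln = Nn$ yields $\sup\abs{g_l} \le (Nn)^l$, hence the pointwise bound $(Nn)^l\abs{f(x)}^{N-l}$ for $l \le N$. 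The case $l > N$ follows, as you say, from iterating Bernstein's inequality on $f^N$ directly, since differentiation does not raise the degree of a trigonometric polynomial. The one imprecise phrase is ``a fortiori the weaker estimate in the case $l \le N$'': iterated Bernstein gives only the uniform bound $(Nn)^l$ there, not the claimed estimate with the factor $\abs{f(x)}^{N-l}$, which is strictly stronger; but you immediately acknowledge this and supply the sharper argument, so nothing is lost. Your write-up is a self-contained replacement for the external citation.
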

	
	\begin{theorem}\label{thm:beurling_R} 
		Let $(T(t))_{t \ge 0}$ be a $C_0$-semigroup on a Banach space $X$.
		\begin{thm_enum} 
			\item If $\R{T(t): 0 < t < 1} < \infty$ and there is $f \in \IC[z]$ such that 
			\begin{equation*} \lim_{\epsilon \downarrow 0} \R{f(T(t)): 0 < t < \epsilon} < \norm{f}_{\mathbb{D}}, \end{equation*}
		then $(T(t))_{t \ge 0}$ extends to an $\mathcal{R}$-analytic semigroup. 
			\item 
			Conversely, if $(T(z))_{z \in \Sigma}$ is $\mathcal{R}$-analytic, for each $f \in \IC_1[z]$ there exists $N_0 \in \IN$ such that for every $N \ge N_0$ there exists $K_0 \in \IR_+$ such that
			\[ \lim_{\epsilon \downarrow 0} \R{f^N(T(t))T(Kt): 0 < t < \epsilon} < \norm{f^N}_{\mathbb{D}} \quad \text{for all } K \ge K_0. \]
		\end{thm_enum}
	\end{theorem}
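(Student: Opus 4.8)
For the first assertion the plan is to transcribe the proof of Theorem~\ref{thm:beurling}\ref{thm:beurling_a} verbatim, replacing the operator norm by the $\mathcal{R}$-bound throughout and invoking Lemma~\ref{lem:kato_R} in place of Lemma~\ref{lem:kato}. As there, after normalizing so that $\norm{f}_{\mathbb{D}} = f(\zeta) = 1$ for some $\abs{\zeta} = 1$, the hypothesis gives $t_0 > 0$ and $\rho < 1$ with $\R{f(T(t)): 0 < t < t_0} \le \rho$. The first point is an $\mathcal{R}$-bounded Neumann series: since the $\mathcal{R}$-bound dominates the norm of each member, $\norm{f(T(t))} \le \rho < 1$ for every fixed $t$, so $[\Id - f(T(t))]^{-1} = \sum_{m \ge 0} f(T(t))^m$ converges in norm; submultiplicativity gives $\R{f(T(t))^m: 0 < t < t_0} \le \rho^m$, and subadditivity together with the stability of $\mathcal{R}$-boundedness under strong limits of the partial sums yields $\R{[\Id - f(T(t))]^{-1}: 0 < t < t_0} \le (1-\rho)^{-1}$. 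Factoring $1 - f(z) = (\zeta - z) g(z)$ gives $(\zeta - T(t))^{-1} = g(T(t))[\Id - f(T(t))]^{-1}$, and $\R{g(T(t)): 0 < t < t_0} < \infty$ because $g(T(t))$ is a fixed polynomial combination of the $T(jt)$ while $\R{T(t): 0 < t < M} < \infty$ for every $M$ (write $T(t) = T(t/m)^m$ and use submultiplicativity). Hence $\R{(\zeta - T(t))^{-1}: 0 < t < t_0} < \infty$, and Lemma~\ref{lem:kato_R} applies.

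The second assertion is the $\mathcal{R}$-analytic version of Beurling's converse, and I would follow the scalar argument of \cite[p.~398]{beu70} with operator norms replaced by $\mathcal{R}$-bounds. Normalize again so that $\norm{f}_{\mathbb{D}} = 1$; then $f \in \IC_1[z]$ forces $\abs{f(1)} < 1$, i.e.\ the maximum of $\abs{f}$ on $\partial\mathbb{D}$ is attained away from $z = 1$. Writing $f^N(T(t)) T(Kt) = \sum_k a_{k,N} T((k+K)t)$ with $n = \deg f$, the exponential functional calculus gives the representation
\begin{equation*}
  f^N(T(t)) T(Kt) = \frac{1}{2\pi i} \int_{\Gamma} e^{Kt\lambda} f^N(e^{t\lambda}) R(\lambda, A) \, d\lambda,
\end{equation*}
where $\Gamma$ runs along the boundary of a sector $\omega + \Sigma_{\sigma'}$ ($\frac{\pi}{2} < \sigma' < \sigma$) on which, by the converse part of Lemma~\ref{lem:kato_R}, the family $\{(\lambda - \omega) R(\lambda, A)\}$ is $\mathcal{R}$-bounded. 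Rescaling $\lambda \mapsto \lambda/t$ keeps the contour essentially fixed and, via the integral estimate for $\mathcal{R}$-bounds (\cite[Corollary~2.14]{KunWei04}), reduces the problem to a \emph{scalar} estimate for $e^{Kt\lambda} f^N(e^{t\lambda})$ integrated against the (now fixed) $\mathcal{R}$-bound of the resolvent family.

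To control that scalar quantity I would bring in Lemma~\ref{lem:bernstein_ineq}: the trigonometric polynomial $x \mapsto f(e^{ix})$ has modulus at most $1$ and degree $n$, so the derivatives of $f^N(e^{ix})$ obey $\abs{(f^N)^{(l)}(x)} \le (Nn)^l \abs{f(e^{ix})}^{N-l}$ for $l \le N$. Expanding $f^N(e^{t\lambda})$ off the imaginary axis by Taylor's formula and inserting these bounds, while using the decay $\abs{e^{Kt\lambda}} = e^{Kt\Re\lambda}$ supplied by the extra factor $T(Kt)$ (this is the role of $z^K$), shows that as soon as $K$ exceeds the Bernstein exponent $Nn$ the integrand decays; choosing $N$ large makes the residual contribution near $z = 1$ of size $\abs{f(1)}^N \ll 1$, so the whole scalar integral drops strictly below $1 = \norm{f^N}_{\mathbb{D}}$.

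I expect this last scalar estimate to be the main obstacle, exactly as in Beurling's proof. The difficulty is to reach the \emph{sharp} constant $\norm{f^N}_{\mathbb{D}}$ \emph{uniformly} as $t \downarrow 0$: estimating the kernel by its modulus is too lossy, since the region near the real axis then contributes $O(\abs{f(1)}^N \log(1/t))$, which diverges. One must retain the cancellation, and it is precisely Bernstein's inequality --- quantifying how fast $f^N$ may grow off the real axis in terms of its size on the axis --- together with the damping $e^{Kt\Re\lambda}$ from the factor $z^K$ that supplies the decay needed to recover a finite bound, uniform in small $t$ and strictly smaller than $\norm{f^N}_{\mathbb{D}}$. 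Everything else is a routine $\mathcal{R}$-bounded rewriting of the holomorphic arguments.
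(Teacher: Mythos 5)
Your part (a) is correct and is essentially the paper's own argument: after normalizing $f(\zeta)=1=\norm{f}_{\mathbb{D}}$, one gets the Neumann-series bound $\R{[\Id - f(T(t))]^{-1}: 0 < t < t_0} \le (1-\rho)^{-1}$ (the paper leaves the submultiplicativity/subadditivity and passage to strong limits implicit, exactly as you spell them out), then factorizes $1-f(z)=(\zeta-z)g(z)$, uses $\R{T(t): 0<t<1}<\infty$ to control $g(T(t))$, and invokes Lemma~\ref{lem:kato_R}.

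Part (b) is where your proposal has a genuine gap, and you half-diagnose it yourself without repairing it. Your plan reduces everything, via \cite[Corollary~2.14]{KunWei04}, to the $L^1$-norm over the contour $\Gamma$ of the scalar kernel $h_t(\lambda) = e^{Kt\lambda}f^N(e^{t\lambda})/(\lambda-\omega)$. But that $L^1$-norm diverges as $t \downarrow 0$ for \emph{every} fixed $N$ and $K$ (when $f(1)\neq 0$): on the portion of $\Gamma$ where $\abs{\lambda}$ ranges between $O(1)$ and $(Kt)^{-1}$, one has $\abs{e^{Kt\lambda}} \ge e^{-c}$, and $e^{t\lambda}$ lies within $O(t\abs{\lambda})$ of $1$, so $\abs{f^N(e^{t\lambda})} \ge (\abs{f(1)}/2)^N$ there, while $\int \abs{d\lambda}/\abs{\lambda-\omega}$ over this region grows like $\log(1/t)$. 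Hence $\norm{h_t}_{L^1(\Gamma)} \gtrsim (\abs{f(1)}/2)^N \log(1/t) \to \infty$. Bernstein's inequality cannot help here: it only limits \emph{growth} of $f^N$ off the axis and is $\Theta(1)$ in this region; likewise the damping $e^{Kt\Re\lambda}$ only acts where $\abs{\lambda} \gtrsim (Kt)^{-1}$, which is exactly outside the region producing the divergence. Since Corollary~2.14 necessarily integrates the \emph{modulus} of the scalar kernel, the oscillatory cancellation you correctly say "must be retained" cannot be retained in this framework at all; the route fails structurally rather than leaving a hard but feasible estimate open.

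The paper avoids Laplace inversion entirely. It writes $f^N(T(t))T(Kt) = \sum_{k} a_{k,N} T(Kt+kt)$ and Taylor-expands each $T(Kt+kt)$ around $Kt$, representing the derivatives by Cauchy's formula on the circles $\abs{z-Kt} = Kt\sin\delta$, which lie inside the sector where $R = \R{T(z): z \in \Sigma_\delta, \abs{z}\le 2}$ is finite. The polynomial cancellation is then carried entirely by the exact scalar coefficients $\sum_k a_{k,N}k^l$, whose modulus equals that of the $l$-th derivative of $x \mapsto f^N(e^{ix})$ at $x=0$, and this is where Lemma~\ref{lem:bernstein_ineq} enters: it gives $(Nn)^l\abs{f(1)}^{N-l}$ for $l \le N$ and $(Nn)^l$ otherwise. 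Each circle has $t$-independent angular measure $2\pi$ and contributes a factor $(K\sin\delta)^{-l}$, so the series over $l$ is geometric once $K\sin\delta > Nn$, yielding a bound uniform in $t \le K^{-1}$ that tends to $0$ as first $N$ and then $K$ grow. This scale-invariant local expansion (circles of radius proportional to $t$) is precisely the mechanism your fixed global contour lacks.
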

	\begin{proof}
		We again may suppose without loss of generality that $f(\zeta) = 1 = \norm{f}_{\mathbb{D}}$ for some $\zeta \in \partial \mathbb{D}$. Then the theorem can be proven exactly along the lines of the proof of Theorem~\ref{thm:beurling} from which we again borrow the notation: We obtain that
		 	\[ \R{\left[\Id - f(T(t)) \right]^{-1}: 0 < t < t_0} \le \frac{1}{1 - \rho} \] 
		for some $t_0 < 1$ and $0 < \rho < 1$ such that $\R{f(T(t)) : 0 < t < t_0} < \rho$. From this we see by factorization that one has $\R{(\zeta - T(t))^{-1} : 0 < t < t_0} < \infty$. Finally, Lemma~\ref{lem:kato_R} shows the claim.
		
		Conversely, let $z \mapsto T(z)$ be $\mathcal{R}$-analytic in the sector $\Sigma_{\tilde{\delta}}$ and let $ 0 < \delta < \tilde{\delta}$. Note that this implies that for $t \in \IR_+$ the ball around $t$ with radius $t \sin \delta$ is contained in $\Sigma_{\delta}$. Let $R \coloneqq \R{T(z): z \in \Sigma_{\delta}, \abs{z} \le 2}$. For a polynomial $f(z) = \sum_{k=0}^n a_k z^k$ we obtain for $s > 0$ that $f(T(t))T(s) = \sum_{k=0}^n a_k T(s + kt)$. Since $z \mapsto T(z)$ is holomorphic in $\Sigma_{\delta}$, we can write for $0 < nt \le s \sin \delta$
			\begin{align*}
				\sum_{k=0}^n a_k T(s + kt) = \frac{1}{2\pi i} \sum_{l=0}^{\infty} \int_{\abs{z-s} = r} \frac{T(z)}{(z-s)^{l+1}} \, dz \, t^l \sum_{k=0}^n a_k k^l, \qquad (r \le s \sin \delta).
			\end{align*}
		We now replace $f$ by $f^N = \sum_{k=0}^{Nn} a_{k,N} z^k$, where $f \in \IC_1[z]$. After scaling we may again assume that $\norm{f}_{\mathbb{D}} = 1$. Further, we choose $s = Kt$ for $K \in \IR_+$. Then, for $K \ge Nn (\sin \delta)^{-1}$ and $r_t = Kt \sin \delta$ we obtain
			\begin{equation}\label{eq:kato_beurling_proof_1}\begin{split}
				\MoveEqLeft \R{f^N(T(t))T(Kt): t \le K^{-1}} \\
				& \le \sum_{l=0}^{\infty} \abs{\sum_{k=0}^{Nn} a_{k,N} k^l} (2\pi)^{-1} \R{t^l \int_{\abs{z- Kt} = r_t} \frac{T(z)}{(z - Kt)^{l+1}} \, dz} 
			\end{split}\end{equation}
		Moreover,
			\begin{equation}\label{eq:kato_beurling_proof_2}\begin{split}
				\int_{\abs{z- Kt} = r_t} \frac{T(z)}{(z - Kt)^{l+1}} \, dz & = i \int_0^{2\pi} \frac{T(Kt + r_t e^{i \theta})}{(r_t e^{i\theta})^{l+1}} r_t e^{i \theta} \, d\theta \\
				& = i (tK \sin \delta)^{-l} \int_0^{2\pi} T(Kt + r_t e^{i \theta}) e^{-il \theta} \, d\theta.
			\end{split}\end{equation}
		Now, using equations \eqref{eq:kato_beurling_proof_1} and \eqref{eq:kato_beurling_proof_2} together with Lemma~\ref{lem:bernstein_ineq} (for $\tilde{f}(x) = f(e^{ix})$ and $x = 0$) in the second inequality below, we obtain the estimate
			\begin{align*}
				\MoveEqLeft \R{f^N(T(t))T(Kt): t \le K^{-1}} \le R \sum_{l=0}^{\infty} \abs{\sum_{k=0}^{Nn} a_{k,N} k^l} (K \sin \delta)^{-l}  \\
				& \le R \left(\abs{f(1)}^N \sum_{l=0}^N \left(\frac{Nn}{\abs{f(1)} K \sin \delta} \right)^l + \sum_{l = N +1}^{\infty} \left( \frac{Nn}{K \sin \delta} \right)^l \right) \\
				& = R \left( \abs{f(1)}^N \frac{1 - C_{1,K}^{N+1}}{1 - C_{1, K}} + \frac{C_{2,K}^{N+1}}{1 - C_{2,K}} \right),
			\end{align*}
		where $C_{1,K} = \frac{Nn}{\abs{f(1)} K \sin \delta}$ and $C_{2,K} = \frac{Nn}{K \sin \delta}$ (of course for $f(1) = 0$ the first term vanishes). Now, since $\abs{f(1)} < 1$, the right hand side is arbitrarily small provided first $N$ and then $K$ are chosen large enough.
	\end{proof}
	
	Again, we obtain the following characterization via polynomials.
	
	\begin{corollary} Let $(T(t))_{t \ge 0}$ be a strongly continuous semigroup on a Banach space $X$ with $\R{T(t): 0 < t < 1} < \infty$. Then $(T(t))_{t \ge 0}$ is $\mathcal{R}$-analytic if and only if there is $f \in \IC[z]$ such that
		\begin{equation*} \lim_{\epsilon \downarrow 0} \R{f(T(t)) : 0 < t < \epsilon} < \norm{f}_{\mathbb{D}}. \end{equation*}
	\end{corollary}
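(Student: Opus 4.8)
The plan is to deduce both implications directly from Theorem~\ref{thm:beurling_R}, exactly as the corresponding real-line characterization of holomorphy was obtained from Theorem~\ref{thm:beurling}. The entire content of the corollary is therefore already packaged in the two parts of Theorem~\ref{thm:beurling_R}; the only extra ingredient needed is the maximum-principle observation that multiplying a polynomial by a power of $z$ leaves its disc-algebra norm unchanged, i.e. $\norm{z \mapsto f(z) z^m}_{\mathbb{D}} = \norm{f}_{\mathbb{D}}$ for every $m \in \IN$.

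For the ``if'' direction there is essentially nothing to do. If such an $f \in \IC[z]$ exists, then the two standing hypotheses $\R{T(t): 0 < t < 1} < \infty$ and $\lim_{\epsilon \downarrow 0} \R{f(T(t)): 0 < t < \epsilon} < \norm{f}_{\mathbb{D}}$ are precisely the hypotheses of Theorem~\ref{thm:beurling_R}~(a), which immediately yields that $(T(t))_{t \ge 0}$ extends to an $\mathcal{R}$-analytic semigroup.

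For the ``only if'' direction, assume $(T(t))_{t \ge 0}$ is $\mathcal{R}$-analytic and fix any $f \in \IC_1[z]$, for instance $f(z) = z - 1$ (for which $\abs{f(1)} = 0 < 2 = \norm{f}_{\mathbb{D}}$, so $f \in \IC_1[z]$). Theorem~\ref{thm:beurling_R}~(b) then produces an $N$ and a real number $K_0$ such that $\lim_{\epsilon \downarrow 0} \R{f^N(T(t))T(Kt): 0 < t < \epsilon} < \norm{f^N}_{\mathbb{D}}$ for every $K \ge K_0$. I would choose an \emph{integer} $K \ge K_0$ and set $g(z) \coloneqq f^N(z)\, z^K \in \IC[z]$. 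The semigroup law gives $g(T(t)) = f^N(T(t))\, T(t)^K = f^N(T(t))\, T(Kt)$, so the left-hand side above is exactly $\lim_{\epsilon \downarrow 0} \R{g(T(t)): 0 < t < \epsilon}$, while the maximum principle gives $\norm{g}_{\mathbb{D}} = \norm{f^N}_{\mathbb{D}}$. Hence $\lim_{\epsilon \downarrow 0} \R{g(T(t)): 0 < t < \epsilon} < \norm{g}_{\mathbb{D}}$, which exhibits $g$ as the required polynomial.

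The single point requiring any care is a minor one: the factor $T(Kt)$ supplied by Theorem~\ref{thm:beurling_R}~(b) may a priori involve a non-integer $K$, whereas rewriting it as $g(T(t))$ for a genuine polynomial $g$ forces $z^K$ to be a monomial, i.e. $K \in \IN$. Since part~(b) delivers the strict inequality for \emph{all} $K \ge K_0$, one simply passes to an integer $K$ in that range, and no real obstacle remains.
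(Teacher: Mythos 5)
Your proof is correct and is essentially the paper's intended argument: the ``if'' direction is Theorem~\ref{thm:beurling_R}~(a) verbatim, and the ``only if'' direction follows from part~(b) by absorbing the factor $T(Kt)$ (with integer $K \ge K_0$) into the polynomial $g(z) = f^N(z)z^K$, whose disc-algebra norm equals $\norm{f^N}_{\mathbb{D}}$ by the maximum principle --- exactly the observation the paper makes before the analogous corollary in the holomorphic case. Your explicit attention to choosing $K$ as an integer is a nice touch that the paper leaves implicit.
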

	
	\section{An extrapolation theorem for \texorpdfstring{$\mathcal{R}$}{R}-analytic semigroups}
	
	Now we use Theorem~\ref{thm:beurling_R} to show an extrapolation theorem for $\mathcal{R}$-analyticity. We formulate the theorem in the abstract context of interpolation spaces in order to emphasize the nature of the used methods. Recall that a Banach space $X$ is called B-convex if it is of non-trivial type or equivalently if $\Rad(X)$ is complemented in $L^2([0,1]; X)$ (see~\cite[Theorem~13.10 \& Theorem~13.15]{DJT95}). Note that every UMD-space is B-convex.

	\begin{theorem}\label{thm:r_analytic_extrapolation} Let $(T_{1}(t))_{t \ge 0}$ and $(T_{2}(t))_{t \ge 0}$ be two consistent semigroups on an interpolation couple $(X_1, X_2)$ of B-convex Banach spaces $X_1$ and $X_2$ and let $X$ be an interpolation space with respect to $(X_1, X_2)$ obtained by a regular interpolation functor $\mathcal{F}$ of exponent $\theta \in (0,1)$. Assume that $(T_1(t))_{t \ge 0}$ is strongly continuous and $\mathcal{R}$-analytic and $\R{T_2(t): 0 < t < 1} < \infty$. Then there exists a unique $\mathcal{R}$-analytic $C_0$-semigroup $(T(z))_{z \in \Sigma}$ on $X$ which is consistent with $(T_1(t))_{t \ge 0}$ and $(T_2(t))_{t \ge 0}$.
	\end{theorem}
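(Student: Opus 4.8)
The plan is to run the proof of Theorem~\ref{thm:interpolation_functor} again, but with operator norms systematically replaced by $\mathcal{R}$-bounds and with the holomorphic Kato--Beurling theorem replaced by its $\mathcal{R}$-analytic counterpart, Theorem~\ref{thm:beurling_R}. Applying the functor $\mathcal{F}$ to the morphisms $T_1(t) = T_2(t) \in \Mor((X_1, X_2), (X_1, X_2))$ yields a semigroup $(T(t))_{t \ge 0}$ on $X$ consistent with both; exactly as in the proof of Theorem~\ref{thm:interpolation_functor}, property~\ref{def:interpolation_space_bonus}, the strong continuity of $(T_1(t))_{t \ge 0}$ and the norm-boundedness of $(T_2(t))_{t \in [0,1)}$ show that $(T(t))_{t \ge 0}$ is a $C_0$-semigroup, and uniqueness is immediate. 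It therefore suffices to verify the hypotheses of Theorem~\ref{thm:beurling_R}: namely $\R{T(t) : 0 < t < 1} < \infty$ together with a polynomial $g$ for which $\lim_{\epsilon \downarrow 0} \R{g(T(t)) : 0 < t < \epsilon} < \norm{g}_{\mathbb{D}}$.

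The decisive new ingredient is that \emph{$\mathcal{R}$-bounds interpolate}: for every family $\mathcal{S}$ of consistent morphisms of $(X_1, X_2)$ one should have
\[ \mathcal{R}(\mathcal{S}_{|X}) \le C\, \mathcal{R}(\mathcal{S}_{|X_1})^{1 - \theta}\, \mathcal{R}(\mathcal{S}_{|X_2})^{\theta} \]
with $C$ independent of $\mathcal{S}$. This is where the B-convexity of $X_1$ and $X_2$ must enter. The natural route is to recall that a family is $\mathcal{R}_p$-bounded exactly when the associated diagonal operator on $\Rad_p(\cdot)$ is bounded, with norm equal to the $\mathcal{R}_p$-bound, and then to apply property~\ref{def:interpolation_space_standard_end} to this diagonal operator, now viewed as a morphism of the couple $(\Rad_p(X_1), \Rad_p(X_2))$. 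The crux --- and the step I expect to be the main obstacle --- is to show that $\Rad_p(X)$ is, up to isomorphism with constants controlled by the B-convexity constants, an interpolation space of exponent $\theta$ with respect to $(\Rad_p(X_1), \Rad_p(X_2))$. B-convexity is precisely what should make this work: it guarantees that $\Rad_p(X_i)$ is the range of the consistent Rademacher projection on $L^p([0,1]; X_i)$, so that the desired identification follows once one knows that $L^p([0,1]; \cdot)$ is compatible with $\mathcal{F}$ and that $\mathcal{F}$ respects the complemented subspace cut out by an idempotent morphism. Verifying the compatibility of $L^p([0,1]; \cdot)$ with an abstract functor $\mathcal{F}$ is the delicate point.

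Granting this inequality, the remainder is a transcription of the computation in the proof of Theorem~\ref{thm:interpolation_lp_scale}. Applying it to $(T_i(t))_{0 < t < 1}$ gives $\R{T(t) : 0 < t < 1} < \infty$ directly from $\R{T_1(t) : 0 < t < 1} < \infty$ (valid since $(T_1(t))_{t \ge 0}$ is $\mathcal{R}$-analytic) and the standing hypothesis on $(T_2(t))_{t \ge 0}$. Since $(T_1(t))_{t \ge 0}$ is $\mathcal{R}$-analytic, the corollary to Theorem~\ref{thm:beurling_R} furnishes $g \in \IC[z]$, which after scaling satisfies $\norm{g}_{\mathbb{D}} = 1$, with $\lim_{\epsilon \downarrow 0} \R{g(T_1(t)) : 0 < t < \epsilon} = \rho < 1$. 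Write $g^N(z) = \sum_{k=0}^{Nn} a_{k,N} z^k$ with $n = \deg g$ and fix $\epsilon < (Nn)^{-1}$. On $X_1$, submultiplicativity of the $\mathcal{R}$-bound gives $\R{g^N(T_1(t)) : 0 < t < \epsilon} \le \R{g(T_1(t)) : 0 < t < \epsilon}^N$, which tends to $\rho^N$ as $\epsilon \downarrow 0$, while on $X_2$, subadditivity and scalar homogeneity applied to $g^N(T_2(t)) = \sum_k a_{k,N} T_2(kt)$ together with $\abs{a_{k,N}} \le \norm{g^N}_{\mathbb{D}} = 1$ (Cauchy's estimate) yield $\R{g^N(T_2(t)) : 0 < t < \epsilon} \le (Nn + 1) \R{T_2(s) : 0 < s < 1}$.

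Feeding these two estimates into the interpolation inequality for $\mathcal{R}$-bounds, one obtains
\[ \lim_{\epsilon \downarrow 0} \R{g^N(T(t)) : 0 < t < \epsilon} \le C\, \rho^{(1 - \theta) N} \bigl( (Nn + 1) \R{T_2(s) : 0 < s < 1} \bigr)^{\theta}. \]
As $\rho < 1$, the right-hand side drops below $1 = \norm{g^N}_{\mathbb{D}}$ once $N$ is chosen large enough, so Theorem~\ref{thm:beurling_R} applied to $g^N$ shows that $(T(t))_{t \ge 0}$ is $\mathcal{R}$-analytic, completing the argument.
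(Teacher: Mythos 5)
Your proof is correct and takes essentially the same route as the paper: obtain the $C_0$-semigroup $(T(t))_{t \ge 0}$ by applying the functor, interpolate $\mathcal{R}$-bounds using B-convexity, estimate $\R{f^N(T_1(t)): 0 < t < \epsilon} \le \rho^N$ by submultiplicativity and $\R{f^N(T_2(t)): 0 < t < \epsilon} \le (Nn+1)\,\R{T_2(s): 0 < s < 1}$ by the contraction principle together with Cauchy's estimate $\abs{a_{k,N}} \le 1$, and then let $N \to \infty$ before invoking Theorem~\ref{thm:beurling_R}. The one step you flag as the main obstacle --- that the Rademacher spaces interpolate, so that $\mathcal{R}$-bounds obey the exponent-$\theta$ inequality --- is precisely the point where the paper also gives no argument but instead cites the identity $\mathcal{F}((\Rad(X_1),\Rad(X_2))) = \Rad(\mathcal{F}((X_1,X_2)))$, valid for B-convex $X_1, X_2$, from \cite[Remark~6.8]{HHK06}; with that reference in place your argument is complete (and you are in fact more explicit than the paper in verifying the hypothesis $\R{T(t): 0 < t < 1} < \infty$ of Theorem~\ref{thm:beurling_R} and the strong continuity of the interpolated semigroup). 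One caution: your sketched justification of the key identity via compatibility of $\mathcal{F}$ with $L^p([0,1];\cdot)$ cannot work for an arbitrary exponent-$\theta$ functor --- the real method $(\cdot,\cdot)_{\theta,q}$ does not commute with Bochner spaces when $q \neq p$ --- so your instinct that this is the delicate point is accurate; it should be handled by the citation (or by a retraction--coretraction argument built around the consistent Rademacher projection, which is how such identities are typically established), not by commuting $\mathcal{F}$ past $L^p$.
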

	\begin{proof}
		Since $(T_1(z))_{z \in \Sigma}$ is $\mathcal{R}$-analytic, there is $f \in \IC[z]$ with $\norm{f}_{\mathbb{D}} = 1$ and $\deg f = n$ and a constant $\epsilon > 0$ such that $\R{f(T_1(t)) : 0 < t < \epsilon} < \rho < 1$. Moreover, one has $\mathcal{F}((\Rad(X_1), \Rad(X_2))) = \Rad(\mathcal{F}((X_1, X_2)))$ by the B-convexity of $X_1$ and $X_2$~\cite[Remark~6.8]{HHK06}. Now, let $R \coloneqq \R{T_2(t) : 0 < t < 1}$ and $f^N(z) = \sum_{k=0}^{Nn} a_{k,N} z^k$. Then for sufficiently small $\epsilon_0(N)$ one has
		\begin{align*} 
			\MoveEqLeft \R{f^N(T(t)) : 0 < t < \epsilon_0(N)} \le C \R{f^N(T_1(t))}^{1-\theta} \R{f^N(T_2(t))}^{\theta} \\
			& \le C \rho^{N(1 - \theta)} \left(\sum_{k=0}^{Nn} \abs{a_{k,N}} \R{T_2(kt)} \right)^{\theta} \le C R^{\theta} \rho^{N(1-\theta)} \left(\sum_{k=0}^{Nn} \abs{a_{k,N}} \right)^{\theta} \\
			& \le C R^{\theta} (Nn + 1)^{\theta} \rho^{N(1-\theta)}.
		\end{align*}
		The right hand side tends to zero as $N$ tends to infinity. Hence, Theorem~\ref{thm:beurling_R} shows the $\mathcal{R}$-analyticity of $(T(t))_{t \ge 0}$. 
	\end{proof}
		
	
	The by far most important application of Theorem~\ref{thm:beurling_R} is the following corollary which was first proven by W. Arendt \& S. Bu~\cite[Theorem~4.3]{AreBu03}.
	
	\begin{corollary}\label{cor:r_analytic_extrapolation} Let $(T_2(z))_{z \in \Sigma}$ be a holomorphic $C_0$-semigroup on $L^2(\Omega, \mathcal{F}, \mu)$ for some $\sigma$-finite measure space $(\Omega, \mathcal{F}, \mu)$ and $(T_p(t))_{t \ge 0}$ a consistent semigroup on $L^p(\Omega, \mathcal{F}, \mu)$ for $p \neq 2$. If $\R{T_p(t): 0 < t < 1} < \infty$, then the semigroups $(T_q(t))_{t \ge 0}$ obtained by interpolation can be extended to $\mathcal{R}$-analytic $C_0$-semigroups on $L^q(\Omega, \mathcal{F}, \mu)$ for all $q$ strictly between $2$ and $p$. 
	\end{corollary}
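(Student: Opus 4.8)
The plan is to read the statement as the special case of Theorem~\ref{thm:r_analytic_extrapolation} obtained by complex interpolation on the couple $(L^2, L^p)$. Fix $q$ strictly between $2$ and $p$; by symmetry I may assume $2 < q < p$. Then $L^q = [L^2, L^p]_\theta$, where $\frac{1}{q} = \frac{1-\theta}{2} + \frac{\theta}{p}$ determines a unique $\theta \in (0,1)$, and the complex interpolation functor $[\cdot,\cdot]_\theta$ is a regular interpolation functor of exponent $\theta$ (recall that $X_1 \cap X_2$ is dense in $[X_1,X_2]_\theta$ for $\theta \in (0,1)$). I would apply Theorem~\ref{thm:r_analytic_extrapolation} to this couple, letting the holomorphic semigroup on $L^2$ play the role of the $\mathcal{R}$-analytic semigroup $(T_1(t))_{t \ge 0}$ and letting $(T_p(t))_{t \ge 0}$ play the role of $(T_2(t))_{t \ge 0}$.

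It then remains to verify the hypotheses of that theorem. First, B-convexity of the endpoints: $L^2$ is a Hilbert space and $L^p$ (for $1 < p < \infty$) is a UMD-space, and every UMD-space is B-convex, so both endpoints qualify. Second, the condition $\R{T_p(t): 0 < t < 1} < \infty$ is exactly the hypothesis of the corollary. The only point genuinely requiring an argument is that the holomorphic semigroup $(T_2(z))_{z \in \Sigma}$ on $L^2$ is in fact $\mathcal{R}$-analytic. This is immediate from the fact that on a Hilbert space a family of operators is $\mathcal{R}$-bounded if and only if it is bounded in operator norm: a holomorphic semigroup is uniformly bounded on every truncated sector $\{z \in \Sigma_\delta : \abs{z} \le 1\}$ for $\delta$ small enough, whence $\R{T_2(z): z \in \Sigma_\delta, \abs{z} \le 1} < \infty$, which is precisely the definition of $\mathcal{R}$-analyticity.

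With all hypotheses in place, Theorem~\ref{thm:r_analytic_extrapolation} yields a unique $\mathcal{R}$-analytic $C_0$-semigroup on $L^q$ consistent with both given semigroups, which is the assertion (in particular strong continuity comes for free, as it is part of the conclusion of that theorem). I expect no serious obstacle in the range $1 < p < \infty$. The one delicate point is the reading of the statement at the endpoints $p \in \{1, \infty\}$, since then $L^p$ is no longer B-convex and Theorem~\ref{thm:r_analytic_extrapolation} does not apply to the couple $(L^2, L^p)$ directly. There I would first fix the target $q$, choose a finite intermediate exponent $p_1$ with $q$ strictly between $2$ and $p_1$, and run the argument on the B-convex couple $(L^2, L^{p_1})$. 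The cost of this reduction is that the hypothesis $\R{T_p(t): 0 < t < 1} < \infty$ must be transferred to $\R{T_{p_1}(t): 0 < t < 1} < \infty$, i.e. one must interpolate the $\mathcal{R}$-bounds down to the B-convex intermediate space; this transfer — rather than the application of the theorem — is the genuine technical heart of the endpoint cases, because interpolating $\mathcal{R}$-boundedness against a mere operator-norm bound at the $L^\infty$ endpoint is not automatic.
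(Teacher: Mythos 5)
Your proposal is correct and is precisely the paper's own (implicit) proof: the corollary is stated as the specialization of Theorem~\ref{thm:r_analytic_extrapolation} to the couple $(L^2, L^p)$ with the complex interpolation functor, and the hypotheses you verify --- B-convexity of $L^2$ (Hilbert) and of $L^p$ (UMD for $1<p<\infty$), plus $\mathcal{R}$-analyticity of the holomorphic semigroup on $L^2$ because $\mathcal{R}$-boundedness coincides with norm-boundedness on Hilbert spaces --- are exactly what the paper leaves to the reader.

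One paragraph on your endpoint discussion, where your assessment of the difficulty is misplaced in one case and slightly miscast in the other. For $p=1$ no delicate transfer is needed: Khintchine's inequality gives the concrete identification $\Rad(L^r)\cong L^r(\Omega;\ell^2)$ for every $1\le r<\infty$, so the diagonal operators interpolate on the $\ell^2$-valued scale $[L^2(\ell^2),L^1(\ell^2)]_\theta=L^q(\ell^2)$; consequently the proof of Theorem~\ref{thm:r_analytic_extrapolation} runs verbatim on the couple $(L^2,L^1)$ with this identification replacing the abstract equality $\mathcal{F}((\Rad(X_1),\Rad(X_2)))=\Rad(\mathcal{F}((X_1,X_2)))$ quoted from \cite{HHK06}, and your two-step reduction closes for the same reason. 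For $p=\infty$ the obstruction is genuine, but note that the hypothesis at that endpoint is an $\mathcal{R}$-bound, not the ``mere operator-norm bound'' you describe; the real problem is that $\Rad(L^\infty)$ is not a square-function space, so an $\mathcal{R}$-bound on $L^\infty$ cannot be converted into an $L^\infty(\ell^2)$-estimate and there is no scale on which to interpolate it against the $L^2$ bound. So the clean reading is that the corollary, like the theorem it follows from, concerns finite $p$ (with $p=1$ recoverable as above), and $p=\infty$ lies outside its scope --- a caveat that applies equally to the paper's own formulation.
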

	
	\begin{remark}(On Arendt \& Bu's proof) In~\cite[Theorem~3.6]{AreBu03} the authors show that $\mathcal{R}$-analyticity can be characterized by the holomorphy of an associated semigroup on $\Rad(X)$ for which the classical Stein interpolation theorem applies. Our argument again works for more general interpolation functors.
	\end{remark}
	
	Exactly as in \cite[Corollary~4.5]{AreBu03}, the above corollary can directly be applied to semigroups with Gaussian estimates. For the sake of completeness we repeat the main notions and arguments.
	
	Let $\Omega \subseteq \IR^N$ be an open set. A semigroup $(T_p(t))_{t \ge 0}$ on $L^p(\Omega)$ has \emph{Gaussian estimates} if there exist constants $C > 0$ and $a > 0$ such that for all $f \in L^p(\Omega)$
		\begin{equation*} \
			\latticenorm{T_p(t)f}(x) \le C G_p(at)\latticenorm{f}(x) \quad \text{for almost all } x \in \Omega \text{ and all } 0 < t \le 1, 
		\end{equation*}
	where $G_p$ denotes the Gaussian semigroup
	\begin{equation*}
		G_p(t)f = k_t * f \quad (f \in L^p(\IR^N)) \qquad \text{with } k_t(x) = \frac{1}{(4\pi t)^{N/2}} e^{-\abs{x}^2/4t}.
	\end{equation*}
	on $L^p(\IR^N)$. Moreover, given a $\sigma$-finite measure space $(\Omega, \mathcal{F}, \mu)$, the following equivalent characterization of the $\mathcal{R}$-boundedness of a family $(T_n)_{n \in \IN} \subset \mathcal{L}(L^p(\Omega, \mathcal{F}, \mu))$ (for $1 \le p < \infty$) is often useful: The \emph{square function estimate}
		\begin{equation*}
			\norm{\Big(\sum_{k=1}^n \abs{T_k f_k}^2 \Big)^{1/2}}_{L^p} \le C \norm{\Big(\sum_{k=1}^n \abs{f_k}^2 \Big)^{1/2}}_{L^p}
		\end{equation*}
	holds for some $C > 0$, all $n \in \IN$ and all sequences $(f_k)_{k=1}^n \subset L^p(\Omega, \mathcal{F}, \mu)$. This is a consequence of Fubini's theorem together with Kahane's inequality~\cite[Remark~2.9]{KunWei04}.

	\begin{corollary} Let $\Omega \subseteq \IR^N$ be an open set and let $(T_p(t))_{t \ge 0}$ be consistent $C_0$-semigroups on $L^p(\Omega)$ for $1 < p < \infty$. Assume that $(T_2(t))_{t \ge 0}$ is holomorphic and has Gaussian estimates. Then $(T_p(t))_{t \ge 0}$ is $\mathcal{R}$-analytic for all $1 < p < \infty$.
	\end{corollary}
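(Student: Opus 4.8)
The plan is to verify the hypotheses of Corollary~\ref{cor:r_analytic_extrapolation}. That result already supplies the conclusion once we know that the $L^2$-semigroup is holomorphic --- which is assumed --- and that some endpoint of the scale satisfies $\R{T_p(t): 0 < t < 1} < \infty$. Since the holomorphic case on $L^2$ is given, the whole proof reduces to extracting the $\mathcal{R}$-bound $\R{T_p(t): 0 < t < 1} < \infty$ for every $p \in (1,\infty)$ directly from the Gaussian estimates.

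To obtain this bound, first I would rewrite the Gaussian estimates as the pointwise domination $\latticenorm{T_p(t)f} \le C\, G_p(at)\latticenorm{f}$ a.e.\ for $0 < t \le 1$, using that the Gaussian semigroup is positivity preserving. For finitely many $f_1, \dots, f_n \in L^p(\Omega)$ and times $t_1, \dots, t_n \in (0,1)$, squaring, summing and taking square roots gives, pointwise almost everywhere,
\[ \Big( \sum_{k=1}^n \latticenorm{T_p(t_k)f_k}^2 \Big)^{1/2} \le C \Big( \sum_{k=1}^n \big(G_p(at_k)\latticenorm{f_k}\big)^2 \Big)^{1/2}. \]
Taking $L^p$-norms and applying the square function characterization of $\mathcal{R}$-boundedness recalled above, the matter reduces to the single estimate $\R{G_p(s): 0 < s < 1} < \infty$ (the factor $a$ is absorbed after rescaling the parameter).

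This remaining fact is classical: the Gaussian semigroup on $L^p(\IR^N)$ is generated by the Laplacian, which for $1 < p < \infty$ has maximal regularity (equivalently, a bounded $H^{\infty}$-calculus), so $(G_p(s))_{s \ge 0}$ is itself $\mathcal{R}$-analytic and in particular $\{G_p(s): 0 < s < 1\}$ is $\mathcal{R}$-bounded. Feeding this into the domination estimate of the previous paragraph yields $\R{T_p(t): 0 < t < 1} < \infty$ for every $p \in (1,\infty)$.

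It then remains to invoke Corollary~\ref{cor:r_analytic_extrapolation}. Fix $q \in (1,\infty)$. If $q = 2$, the semigroup $(T_2(t))_{t \ge 0}$ is holomorphic on the Hilbert space $L^2(\Omega)$, where $\mathcal{R}$-boundedness coincides with operator-norm boundedness, so it is automatically $\mathcal{R}$-analytic. If $q \neq 2$, choose $p$ on the side of $q$ away from $2$ (so $p > q$ if $q > 2$ and $1 < p < q$ if $q < 2$), ensuring that $q$ lies strictly between $2$ and $p$; since $\R{T_p(t): 0 < t < 1} < \infty$, Corollary~\ref{cor:r_analytic_extrapolation} gives the $\mathcal{R}$-analyticity of $(T_q(t))_{t \ge 0}$. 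I expect the only genuinely delicate step to be the transfer of $\mathcal{R}$-boundedness through the pointwise domination; this is precisely what the square function characterization is tailored to handle, so no serious obstacle remains once the $\mathcal{R}$-boundedness of the Gaussian semigroup is in hand.
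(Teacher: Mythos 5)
Your proposal is correct, and its overall skeleton coincides with the paper's: reduce via Corollary~\ref{cor:r_analytic_extrapolation} to the $\mathcal{R}$-bound $\R{T_p(t): 0 < t < 1} < \infty$, transfer this through the pointwise Gaussian domination using the square function characterization of $\mathcal{R}$-boundedness (your handling of the moduli, of the rescaling by $a$, and of the trivial case $q=2$ is fine and in fact more explicit than the paper's). Where you diverge is the final ingredient: to get the square function estimate for the Gaussian semigroup itself, the paper argues elementarily, dominating $\sup_{0<t<1}\abs{k_t * f}$ pointwise by the Hardy--Littlewood maximal function and then invoking the Fefferman--Stein vector-valued maximal inequality; you instead cite the classical fact that the Laplacian has maximal regularity on $L^p(\IR^N)$ for $1<p<\infty$, so that by Weis's theorem the Gaussian semigroup is $\mathcal{R}$-analytic and in particular $\mathcal{R}$-bounded near zero. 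Your route is legitimate but invokes heavier machinery, and somewhat circuitously so: the standard proofs of $\mathcal{R}$-boundedness of the Gaussian semigroup run through exactly the maximal-function argument the paper gives, so the paper's version is the more self-contained one. One caveat: your parenthetical ``maximal regularity (equivalently, a bounded $H^{\infty}$-calculus)'' overstates a known implication --- a bounded $H^{\infty}(\Sigma_{\phi})$-calculus with $\phi < \frac{\pi}{2}$ implies maximal regularity on $L^p$, but the converse fails in general (only for special operators like the Laplacian do both hold); since the Laplacian does have both properties, this inaccuracy does not affect the validity of your argument.
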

	\begin{proof}
		By Corollary~\ref{cor:r_analytic_extrapolation} and the above remarks, it is sufficient to show square function estimates for the Gaussian semigroup. For this notice that by~\cite[p.~24, Proposition]{Ste93} one has
			\begin{equation*} 
				\sup_{0 < t < 1} \abs{k_t * f}(x) \le \sup_{t > 0} t^{-n/2} \abs{k_t * f}(x) \le c Mf(x) \qquad \text{for all } x \in \IR^N
			\end{equation*}
		for some constant $c \ge 0$ only depending on $k_1$. Hence, the boundedness of the vector-valued maximal operator~\cite[p.~51, Theorem~1]{Ste93} yields for all $n \in \IN$, $t_1, \ldots, t_n \in (0,1)$ and $f_1, \ldots, f_n \in L^p(\IR^N)$
			\begin{equation*}
				\norm{\Big(\sum_{k=1}^n \abs{G_p(t_k) f_k}^2 \Big)^{1/2}}_{p} \le c \norm{\Big(\sum_{k=1}^n \abs{M f_k}^2\Big)^{1/2}}_{p} \le C \norm{\Big(\sum_{k=1}^n \abs{f_k}^2 \Big)^{1/2}}_{p}. \qedhere
			\end{equation*}
	\end{proof}
	
	\begin{remark} Of course, more generally, a variant of the above argument works on arbitrary $\sigma$-finite $L^p$-spaces if there exists an $\mathcal{R}$-bounded family of operators $(S_p(t))_{0 < t \le 1}$ that dominates $(T_p(t))_{0 < t \le 1}$.   
	\end{remark}
	
	\section{Some Remarks on Dilation Arguments}
	
	Dilation theorems for semigroups have been a powerful tool to show maximal regularity for broad classes of semigroups from the beginning. Already the first positive result for maximal regularity by D. Lamberton~\cite[Théorème~1]{Lam87} used dilation arguments. We say that a $C_0$-semigroup $(T(t))_{t \ge 0}$ on a Banach space $X$ \emph{dilates to} a $C_0$-group $(U(t))_{t \in \IR}$ on a second Banach space $Z$ if there exists an isomorphic embedding $J: X \hookrightarrow Z$ such that
		\begin{equation*}
			JT(t) = PU(t)J \qquad \text{for all } t > 0,
		\end{equation*}
	where $P: Z \to J(X)$ is a projection onto $J(X)$.
	
	We want to show that in principle dilation arguments can sometimes be used together with the above results to obtain extrapolation results for $\mathcal{R}$-analyticity. However, we will see that in theses cases there is a more natural and direct way to obtain $\mathcal{R}$-analyticity.
	
	First notice that one has $\R{T(t): 0 < t < 1} < \infty$ if the same holds for the dilated group $(U(t))_{t \in \IR}$. However, one knows that on broad classes of Banach spaces, in particular for $L^p$-spaces $(1 < p < \infty)$, a $C_0$-group is $\mathcal{R}$-bounded on the real line if and only if its generator is an operator of scalar type~\cite[Corollary~7.6 \& Note]{Wei06} which are rare on non-Hilbert spaces. Nevertheless this strategy works for a positive contractive holomorphic $C_0$-semigroup $(T(t))_{t \ge 0}$ on $L^p$ $(1 < p < \infty)$. Such semigroups have a bounded $H^{\infty}(\Sigma_{\phi})$-calculus for some $\phi \in (0, \frac{\pi}{2})$~\cite[4.7.5]{Are04}. Hence, by~\cite[Theorem~5.1]{FroWei06} $(T_p(t))_{t \ge 0}$ dilates to a $C_0$-group $(U(t))_{t \in \IR}$ of isometries on $L^p([0,1];L^p)$ whose generator is of scalar type and one obtains the desired $\mathcal{R}$-boundedness result.
	
	However, one knows that on $L^p$-spaces ($1 < p < \infty)$ a bounded $H^{\infty}(\Sigma_{\phi})$-calculus for some $\phi \in (0, \frac{\pi}{2})$ directly implies $\mathcal{R}$-analyticity~\cite[Remark 12.9c]{KunWei04} and that the semigroups which dilate to a group whose generator is of scalar type are exactly those with such a calculus~\cite[Theorem~5.1]{FroWei06}. So in this case one can always deduce $\mathcal{R}$-analyticity directly without our methods.

	\bibliographystyle{amsalpha}
	\bibliography{approx_identity_holomorphic}{}

\end{document}